\def\eu{\mathfrak}
\def\ma{\mathbb}
\def\mc{\mathcal}
\def\p{{\eu p}_{\infty}}
\def\P{{\mc P}_{\infty}}
\def\f{{\ma F}_q^{\ast}}
\def\F{{\ma F}_q}
\def\pK{\eu p}
\def\pL{\eu P}
\def\fin{\hfill\qed\bigskip}
\def\ge#1{#1_{{gex}}}
\def\g#1{#1_{ge}}
\def\*#1{#1^*}
\def\cicl#1#2{k(\Lambda_{{#1}^{#2}})}
\def\lra{\longrightarrow}
\newcommand{\sgn}{\operatorname{sgn}}
\newcommand{\Sig}{\operatorname{Sig}}
\newcommand{\ext}{\operatorname{ext}}
\newcommand{\Gal}{\operatorname{Gal}}
\newcommand{\Aut}{\operatorname{Aut}}
\newcommand{\Id}{\operatorname{Id}}
\newcommand{\im}{\operatorname{im}}
\newcommand{\N}{\operatorname{N}}
\newcommand{\rest}{\operatorname{rest}}
\newcommand{\Hom}{\operatorname{Hom}}
\newcommand{\hooklongrightarrow}{\lhook\joinrel\longrightarrow}
\newcounter{bean}
\newcounter{2bean}
\def\l{
\begin{list}
{\rm{(\alph{bean}).-}}{\usecounter{bean}
\setlength{\labelwidth}{0.8in}
\setlength{\labelsep}{0.3cm}
\setlength{\leftmargin}{1cm}}}
\def\las{\begin{list}
	{{\rm {(\arabic{2bean})}}}{\usecounter{2bean}
\setlength{\labelwidth}{0.8in}
\setlength{\labelsep}{0.3cm}
\setlength{\leftmargin}{1cm}}}
\numberwithin{equation}{section}
\newtheorem{theorem}{Theorem}[section]
\newtheorem{proposition}[theorem]{Proposition}
\newtheorem{example}[theorem]{Example}
\newtheorem{remark}[theorem]{Remark}
\newtheorem{definition}[theorem]{Definition}
\newtheorem{corollary}[theorem]{Corollary}
\title[Class fields and extended genus fields]
{Class fields, Dirichlet characters and extended genus fields of global
function fields}
\author[M. Rzedowski]{Martha Rzedowski--Calder\'on}
\address{Departamento de Control Autom\'atico\\
Centro de Investigaci\'on y de Estudios Avanzados del I.P.N.}
\email{mrzedowski@ctrl.cinvestav.mx}
\author[G. Villa]{Gabriel Villa--Salvador}
\address{Departamento de Control Autom\'atico\\
Centro de Investigaci\'on y de Estudios Avanzados del I.P.N.}
\email{gvillasalvador@gmail.com, gvilla@ctrl.cinvestav.mx}
\subjclass[2010]{Primary 11R58; Secondary 11R29, 11R60}
\keywords{Global fields, genus fields, extended
genus fields, Dirichlet characters, class fields}
\date{April 4, 2022}
\begin{document}

\begin{abstract}

We obtain the extended genus field of an
abelian extension of a rational function field. 
We follow the definition
of Angl\`es and Jaulent, which uses class
field theory. First we show that the natural 
definition of extended
genus  field of a cyclotomic function field obtained by 
means of Dirichlet characters is the same as the one
given by Angl\`es and Jaulent. Next we study the
extended genus field of a finite abelian extension of a
rational function field along the lines of the study
of genus fields of abelian extensions of
rational function fields and compare this approach
with the one given by Angl\`es and Jaulent.

\end{abstract}

\maketitle

\section{Introduction}\label{S1}

C.F. Gauss \cite{Ga1801}
was the first to study extended genus fields introducing
the genus concept in the context of
quadratic forms. It seems that D. Hilbert \cite{Hi1894}
and then E. Hecke \cite{He23} were
the first in translating Gauss' language of genus theory to the number
field setting. Later on, H. Hasse \cite{Ha51} considered
the extended genus field of a quadratic number field by
means of the kernel of characters in the context of
class field theory. Starting from here, H.W. Leopoldt
\cite{Le53} generalized Hasse's results defining
the extended genus field of a finite abelian extension
of the field of rational numbers. In his study, Leopoldt
studied the arithmetic of abelian number fields using {\em Dirichlet
characters}. A. Fr\"ohlich defined the concept of
genus field of an arbitrary number field 
\cite{Fr59-1,Fr59-2}. Fr\"ohlich's definition of the genus field
of a finite number field $K$ is $\g K:=KF$ where $F$
is the maximal abelian extension of the field of rational
numbers ${\ma Q}$ contained in the Hilbert Class Field
(HCF) $H_K$ of $K$. Similarly, the {\em extended
genus field} of a number field $K$ is defined as
$\ge K:=K F_+$, where $F_+$ is the maximal abelian
extension of ${\ma Q}$ contained in the extended or narrow
Hilbert Class Field $H_K^+$. Since then, numerous
authors have studied both, genus and extended genus
fields of number fields.

In the number field setting, the concepts of HCF $H_K$ and extended
HCF $H_K^+$ of $K$ are defined canonically as the maximal unramified
extension and as the maximal unramified extension at the
finite primes of $K$, respectively. In particular, the concepts of
genus field and of extended genus field are canonically defined.
We have  $K\subseteq \g K\subseteq
H_K$ and the Galois group $\Gal(H_K/K)$ is isomorphic
to the class group $Cl_K$ of $K$.
The genus field $\g K$ corresponds to a 
subgroup $G_K$ of $Cl_K$ and we have $\Gal(\g K/K)\cong
Cl_K/G_K$. The degree $[\g K:K]$ is called the {\em genus 
number} of $K$ and $\Gal (\g K/K)$ is called the {\em genus group}
of $K$. Similarly, $K\subseteq \ge K\subseteq H_K^+$ 
and $\ge K$ corresponds
to a subgroup $G_{K^+}$ of $\Gal(H_K^+/K)\cong Cl_{K^+}$.

The function field case is quite different since there are several
possible good definitions of HCF and consequently of extended
HCF. The definition given depends heavily on which aspect one
wants to study. The first possible definition of HCF of a
global function field $K$ would be as the maximal abelian
extension of $K$. However this extension is of infinite degree
over $K$ because it contains all the extensions of constants.
If one wants a finite extension as the HCF, it is necessary to impose
a splitting condition on some primes since every prime
is eventually inert in the extensions of constants.

The first person who considered genus fields of global function fields
was R. Clement \cite{Cl92} who defined the genus field of
a cyclic Kummer extension $K$ of a rational function field $k$ of
prime degree. She followed closely the definition given by
Hasse in \cite{Ha51}. The next step was given by S. Bae and
J.K. Koo in \cite{BaKo96} where they developed the concept of genus
fields along the lines of Fr\"ohlich's work. In fact, they defined
the extended genus field for an arbitrary global function
field $K$ using a generalization of cyclotomic function field
extensions given by the Carlitz module. The 
genus field of a general abelian extension
of a global function field over a rational
function field using Dirichlet characters can be found in 
\cite{MaRzVi2013,MaRzVi2017,BaMoReRzVi2018}.

The definition of HCF of a global function field we
will be using is the following.
For a function field $K$, let $S$ be a finite nonempty set of
places of $K$. Then the HCF relative to $S$, $H_{K,S}$ 
of $K$, is the maximal 
unramified abelian extension of $K$ such that every element
of $S$ decomposes fully in $H_{K,S}$. We have that $H_{K,S}/K$ is 
a finite extension and $\Gal(H_{K,S}/K)\cong Cl_S$, the class
group $Cl_S$ of the Dedekind domain ${\mc O}_S$ of the elements
of $K$ with poles contained in $S$.

There is no simple definition of what the extended HCF of a function
field $K$ should be. In the number field case, we have that the
extended HCF $H_K^+$ of $K$ is the maximal abelian extension of $K$
unramified at the finite primes and it is the ray class field of the
modulus $1_+:=\prod_{\text{$\pK$ is real}}\pK$. We have that
a place $\pK$ decomposes fully in $H_K^+/K$ if and only if
$\pK$ is principal generated by a totally positive element. Following
this idea, B. Angl\`es and J.-F. Jaulent \cite{AnJa2000} defined the
extended HCF of a global function field $K$ in an analogous way.
In fact the definition of Angl\`es and Jaulent works for any
global field, either numeric or function.

In this paper, we use the above definition as
the extended HCF $H_K^+$ of a global
function field $K$. Using this definition we define the {\em extended
genus field} $\ge K$ of $K$ over $k$, where $k=\F(T)$
and $K/k$ is a finite geometric extension, as $\ge K=KF$ 
where $F$ is the maximal abelian 
extension of $k$ contained in $H_K^+$.
The main purpose of this paper is to show that if
$K$ is contained in a cyclotomic function field, then $\ge K$ is
the maximal abelian extension of $K$ contained in a cyclotomic
function field unramified at the finite primes.  This is Theorem \ref{T4.4},
where we obtain that $\ge K$ is the field associated to a group
of Dirichlet characters. In particular
this definition is the same as the one given in
\cite{RaRzVi2019} in the case of a field contained in a 
cyclotomic function field but it might be different in the general case,
even for abelian extensions.

The general expression of $\ge K$ for a finite abelian extension
$K$ of $k$ is given in Theorem \ref{T5.1}.

\section{Preliminaries and notations}\label{S2}

We denote by $k=\F(T)$ the global rational function
field with field of constants the finite field of $q$ elements $\F$.
Let $R_T=\F[T]$ be the ring of polynomials, that is, the
ring of integers of $k$ with respect to the pole of $T$, the
infinite prime $\p$. Let $R_T^+:=\{P\in R_T\mid P \text{\ is
monic and irreducible}\}$. The elements of $R_T^+$ are the
{\em finite primes} of $k$ and $\p$ is the {\em infinite prime} of $k$.
For $N\in R_T$, $\Lambda_N$ denotes the $N$--th torsion of 
the Carlitz module. A finite extension $F/k$ will be called 
{\em cyclotomic} if there exists $N\in R_T$ such that
$k\subseteq F\subseteq \cicl N{}$.

Given a cyclotomic function field $E$, the group of Dirichlet characters
$X$ corresponding to $E$ is the group $X$ such that 
$X\subseteq\widehat{(R_T/\langle N\rangle)^*}\cong 
\widehat{\Gal(\cicl N{}/k)}=\Hom\big((R_T/\langle N\rangle)^*,
{\ma C}^*\big)$
and $E=\cicl N{}^{H}$ where $H=\cap_{\chi\in X}\ker \chi$.
For the basic results on Dirichlet characters, we refer to
\cite[Ch. 3]{Wa97} or \cite[\S 12.6]{Vil2006}.

For a group of Dirichlet characters $X$, let 
$Y=\prod_{P\in R_T} X_P$ where $X_P=\{\chi_P\mid 
\chi\in X\}$ and $\chi_P$ is the $P$--th component of $\chi$:
$\chi=\prod_{P\in R_T^+} \chi_P$. If $E$ is the field corresponding
to $X$, we define $\ge E$ as the field corresponding
to $Y$. We have that $\ge E$ is the maximal 
unramified extension at the finite
primes of $E$ contained in a cyclotomic function field $\cicl N{}$.
The infinite prime $\p$ might be ramified in $\ge E/k$
(see \cite{MaRzVi2013}).

Let $L_n=\cicl {1/T^{n+1}}{}^{\f}$, $n\in{\ma N}\cup\{0\}$
where $\*{\F}\subseteq\Big(R_{1/T}/\langle 1/T^{n+1}\rangle
\Big)^*$, is isomorphic to the inertia group of 
the prime corresponding to $T$ in $k\big(\Lambda_{
1/T^{n+1}}\big)/k$. The prime
$\p$ is the only ramified prime in $L_n/k$ and it is totally and
wildly ramified. For $n\in {\ma N}\cup\{0\}$ and any finite extension $F/k$,
we denote ${_nF}:= L_n F$. 
For $m\in {\ma N}$, and for any finite extension
$F/k$, $F_m$ denotes the extension of constants: $F_m=F
{\ma F}_{q^m}$. In particular $k_m={\ma F}_{q^m}(T)$.

Given a finite abelian extension $K/k$, there exist $n\in{\ma N}
\cup\{0\}$, $m\in{\ma N}$ and $N\in R_T$ such that $K\subseteq
L_n\cicl N{}k_m={_n\cicl N{}_m}$ (see \cite{Hay74} or
\cite[Theorem 12.8.31]{Vil2006}).
We define $M:=L_nk_m$. In $M/k$ no finite prime of $k$ is
ramified.

For any extension $E/F$ of global fields and for any place $\pL$
of $E$ and $\pK=\pL\cap F$, the ramification index is denoted by
$e_{E/F}(\pL|\pK)=e(\pL|\pK)$ and the inertia degree is denoted
by $f_{E/F}(\pL|\pK)=f(\pL|\pK)$. When the extension is Galois
we denote $e_{\pK}(E|F)=e_{E/F}(\pL|\pK)$ and $f_{\pK}(E|F)
=f_{E/F}(\pL|\pK)$. In particular for any abelian extension $E/k$,
$e_P(E|k)$ and $f_P(E|k)$ denote the ramification index and the inertia
degree of $P\in R_T^+$ in $E/k$ respectively, and we denote by
$e_{\infty}(E|k)$ and $f_{\infty}(E|k)$ the ramification index and
the inertia degree of $\p$ in $E/k$. 

For any finite separable extension $K/k$ the {\em finite primes}
of $K$ are the primes over the primes $P$ in $R_T^+$
and the {\em infinite primes} of $K$ are the primes over $\p$.
The {\em Hilbert class field} $H_K$ of $K$ is the maximal 
abelian extension of $K$
unramified at every finite prime of $K$ and where all the infinite
primes of $K$ are fully decomposed. The {\em genus field} $\g K$
of $K/k$ is the maximal extension of $K$ contained in $H_K$
and such that it is the composite $\g K=KF$ where $F/k$ is abelian.
We choose $F$ the maximal possible extension. In other words, $F$ is
the maximal abelian extension of $k$ contained in $H_K$.

Let $K/k$ be a finite abelian extension. We know that
$\g K=K\g {E^H}$ is the genus field of $K$ where $H$ is the
decomposition group of the infinite primes in
$KE/K$ and $E:=KM\cap \cicl N{}$ (see \cite{BaMoReRzVi2018}).
We also know that $K\g E/\g K$ and $KE/K$ are extensions
of constants.

For a local field $F$ with prime $\pK$, we denote by $F(\pK)
\cong \F$ the residue field of $F$ and by $U_{\pK}^{(n)}=1+\pK^n$
the group of $n$--th units of $F$, $n\in{\ma N}\cup\{0\}$.
Let $\pi=\pi_F=\pi_{\pK}$ be a uniformizer for $\pK$, that is,
$v_{\pK}(\pi)=1$. Then the multiplicative group of $F$ 
satisfies $\*F\cong\langle\pi\rangle\times U_{\pK}
\cong \langle\pi\rangle\times \*\F\times U_{\pK}^{(1)}$ as groups.

For a global field $F$, we denote by $J_F$ the id\'ele group of $F$
and by $C_F=J_F/F^*$ the id\'ele class group of $F$.

\section{Extended genus field of a global function field}\label{S3}

First we establish the definition of {\em extended genus fields} according
to Angl\`es and Jaulent \cite{AnJa2000}. We begin recalling two results
on class field theory.

\begin{proposition}\label{P3.0}
Let $F$ be a global (resp. local) field and let $R/F$ be the field corresponding
to the subgrup ${\mc H}<C_F=J_F/F^*$ 
(resp. ${\mc H}<F^*$), that is, ${\mc H}$
is the open subgroup of $C_F$ (resp. open subgroup of $F^*$)
such that ${\mc H}=\N_{R/F}C_R$ (resp. ${\mc H}=\N_{R/F}R^*$) and
$\Gal(R/F)\cong C_F/{\mc H}$ (resp. $\Gal(R/F)\cong R^*/{\mc H}$).
Let $E/F$ be a finite separable extension. Then the extension $ER/E$
corresponds to the group $\N^{-1}_{E/F}({\mc H})$ of $C_E$ (resp. of $E^*$).
\begin{gather*}
\xymatrix{
E\ar@{-}[rr]^{\N^{-1}_{E/F}({\mc H})}\ar@{-}[d]&&ER\ar@{-}[d]\\
F\ar@{-}[rr]_{\mc H}&&R
}
\end{gather*}
\end{proposition}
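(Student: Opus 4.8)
The plan is to prove this via the functorial behavior of the norm map under the reciprocity isomorphism of class field theory. The statement is the standard "base change" or "norm compatibility" property of the Artin reciprocity map. I would treat the global and local cases in parallel, since the underlying argument is identical once one invokes the compatibility of the reciprocity maps with the norm.

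First I would recall the fundamental functoriality diagram of class field theory relating the reciprocity maps of $F$ and $E$. For a finite separable extension $E/F$, the norm map $\N_{E/F}\colon C_E\to C_F$ (resp. $\N_{E/F}\colon E^*\to F^*$ in the local case) fits into a commutative square with the Artin maps and the restriction map $\Gal(\overline{F}/F)\supseteq\Gal(\cdot/F)\to\Gal(\cdot/E)$ on Galois groups going the appropriate direction. Concretely, under the reciprocity isomorphism $C_F/\N_{R/F}C_R\cong\Gal(R/F)$, the abelian extension $R/F$ corresponds to the open subgroup $\mc H=\N_{R/F}C_R$. The field $ER$ is an abelian extension of $E$ (as the compositum of $E$ with the abelian extension $R/F$), so by class field theory over $E$ it corresponds to a unique open subgroup of $C_E$, namely $\N_{ER/E}C_{ER}$.

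The key step is to identify this subgroup as $\N^{-1}_{E/F}(\mc H)$. I would argue as follows: an element $\alpha\in C_E$ lies in $\N_{ER/E}C_{ER}$ precisely when its Artin symbol $\big(\alpha, ER/E\big)$ is trivial in $\Gal(ER/E)$. By the compatibility of the Artin map with restriction, the restriction of $\big(\alpha, ER/E\big)$ to $\Gal(R/F)$ (via the natural embedding $\Gal(ER/E)\hookrightarrow\Gal(R/F)$, which identifies $\Gal(ER/E)$ with the decomposition-type subgroup $\Gal(R/R\cap E)$) equals the Artin symbol $\big(\N_{E/F}\alpha, R/F\big)$. Hence $\big(\alpha, ER/E\big)$ is trivial if and only if $\big(\N_{E/F}\alpha, R/F\big)$ is trivial, i.e. if and only if $\N_{E/F}\alpha\in\N_{R/F}C_R=\mc H$, which is exactly the condition $\alpha\in\N^{-1}_{E/F}(\mc H)$. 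This establishes $\N_{ER/E}C_{ER}=\N^{-1}_{E/F}(\mc H)$, and therefore $ER/E$ is the field corresponding to $\N^{-1}_{E/F}(\mc H)$ as claimed. The local case is verbatim the same with $C_F, C_E$ replaced by $F^*, E^*$.

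The main obstacle, and the point requiring care, is the identification of $\Gal(ER/E)$ with a subgroup of $\Gal(R/F)$ and the verification that the embedding is compatible with the Artin symbols; one must use that $R\cap E$ may be a proper subextension, so that $\Gal(ER/E)\cong\Gal(R/(R\cap E))$ rather than all of $\Gal(R/F)$. However, since the condition we test is triviality of the symbol in $\Gal(ER/E)$ and this injects into $\Gal(R/F)$, the equivalence of the two vanishing conditions goes through without needing surjectivity. I would also note that openness of $\N^{-1}_{E/F}(\mc H)$ follows from continuity of $\N_{E/F}$ together with openness of $\mc H$, so that the subgroup genuinely corresponds to a finite abelian extension and the existence theorem of class field theory applies.
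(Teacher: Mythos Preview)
Your argument is correct and is precisely the standard proof of this base-change property via the norm compatibility of the Artin reciprocity map. The paper itself does not supply a proof at all: its entire proof environment reads ``See \cite[Ch.~X, Theorem 6]{Lan94} or \cite[Theorem 3.8]{RaRzVi2019}.'' So there is no real comparison of approaches to make; you have essentially written out the argument that the cited references contain. The care you take with the injection $\Gal(ER/E)\hookrightarrow\Gal(R/F)$ and with the openness of $\N_{E/F}^{-1}(\mc H)$ is appropriate and completes the sketch.
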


\begin{proof}
See \cite[Ch. X, Theorem 6]{Lan94} or \cite[Theorem 3.8]{RaRzVi2019}.
\end{proof}

\begin{proposition}\label{P3.-1}
Let $L/K$ be a finite separable extension of global fields. Let ${\mc H}$
be an open subgroup of finite index in $C_L$ and let $L_{\mc H}$ be
its class field. Let $K_0$ be the maximal abelian extension of K contained
in $L_{\mc H}$. Then the norm group of $K_0$ is $N_{L/K}({\mc H})$.
\end{proposition}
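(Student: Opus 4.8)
The plan is to identify the norm group of $K_0/K$ by characterizing exactly which finite abelian extensions of $K$ lie inside $L_{\mc H}$, and then matching that characterization against the subgroup $\N_{L/K}({\mc H})$ of $C_K$. First I would record the basic finiteness: since ${\mc H}$ is open of finite index in $C_L$, the class field $L_{\mc H}/L$ is finite abelian, so $L_{\mc H}/K$ is finite separable and its maximal abelian subextension $K_0/K$ is a finite abelian extension with a well-defined norm group ${\mc N}_0:=\N_{K_0/K}(C_{K_0})$, an open subgroup of finite index in $C_K$. The goal is then to prove ${\mc N}_0=\N_{L/K}({\mc H})$.

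The heart of the argument uses Proposition \ref{P3.0}. Let $K'/K$ be any finite abelian extension with norm group ${\mc N}'=\N_{K'/K}(C_{K'})$. Because $K'\subseteq K'L$ while $L\subseteq L_{\mc H}$, one has the elementary equivalence
\[
K'\subseteq L_{\mc H}\iff K'L\subseteq L_{\mc H}.
\]
Applying Proposition \ref{P3.0} with $F=K$, $R=K'$ and $E=L$, the extension $K'L/L$ is abelian and corresponds to the subgroup $\N^{-1}_{L/K}({\mc N}')$ of $C_L$. Since $L_{\mc H}/L$ corresponds to ${\mc H}$, the inclusion-reversing correspondence over $L$ gives $K'L\subseteq L_{\mc H}$ iff $\N^{-1}_{L/K}({\mc N}')\supseteq{\mc H}$, i.e. iff $\N_{L/K}({\mc H})\subseteq{\mc N}'$. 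Combining, for every finite abelian $K'/K$,
\[
K'\subseteq L_{\mc H}\iff \N_{L/K}({\mc H})\subseteq{\mc N}'.
\]

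Taking $K'=K_0$ immediately yields $\N_{L/K}({\mc H})\subseteq{\mc N}_0$. For the reverse inclusion I would invoke transitivity of the norm on id\`ele class groups: since ${\mc H}=\N_{L_{\mc H}/L}(C_{L_{\mc H}})$,
\[
\N_{L/K}({\mc H})=\N_{L/K}\big(\N_{L_{\mc H}/L}(C_{L_{\mc H}})\big)=\N_{L_{\mc H}/K}(C_{L_{\mc H}}).
\]
The right-hand side is the norm group of the finite separable extension $L_{\mc H}/K$, hence an open subgroup of finite index in $C_K$; let $K_1/K$ be the abelian extension it determines, so ${\mc N}_1:=\N_{K_1/K}(C_{K_1})=\N_{L/K}({\mc H})$. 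Feeding $K'=K_1$ into the displayed equivalence (the hypothesis holds with equality) gives $K_1\subseteq L_{\mc H}$, whence maximality of $K_0$ forces $K_1\subseteq K_0$, that is ${\mc N}_0\subseteq{\mc N}_1=\N_{L/K}({\mc H})$. The two inclusions give ${\mc N}_0=\N_{L/K}({\mc H})$.

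The one genuinely delicate point is that $\N_{L/K}({\mc H})$ must be an open subgroup of finite index in $C_K$ in order to be a legitimate norm group for the existence theorem; finite index alone would not guarantee openness. This is precisely what the transitivity identity $\N_{L/K}({\mc H})=\N_{L_{\mc H}/K}(C_{L_{\mc H}})$ resolves, since the full norm group of a finite separable extension is known to be open of finite index. In fact that same identity already presents $\N_{L/K}({\mc H})$ as the norm group of the maximal abelian subextension of $L_{\mc H}/K$, namely $K_0$, so once the standard description of $\N_{L_{\mc H}/K}(C_{L_{\mc H}})$ is granted the whole statement can alternatively be read off directly; I would nonetheless present the Proposition \ref{P3.0} route as the self-contained core, using transitivity only to secure openness and the reverse inclusion.
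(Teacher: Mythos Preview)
Your argument is correct. The paper itself does not supply a proof of this proposition; it simply refers the reader to \cite[Lemma 1]{Fu67} and \cite[Proposici\'on 17.6.48]{RzeVil2017}. Your write-up is therefore more than the paper offers: a self-contained proof built on Proposition~\ref{P3.0} together with transitivity of the norm on id\`ele class groups. This is in fact the standard route (and essentially Furuta's), so there is nothing to contrast.

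One small remark on presentation: the transitivity identity $\N_{L/K}({\mc H})=\N_{L_{\mc H}/K}(C_{L_{\mc H}})$, combined with the basic fact that for any finite separable extension $M/K$ the group $\N_{M/K}(C_M)$ is the norm group of the maximal abelian subextension of $M/K$, already \emph{is} the proposition. You acknowledge this at the end, but as written the Proposition~\ref{P3.0} argument and the transitivity argument are somewhat redundant rather than complementary; you might streamline by picking one as primary and citing the other only for the openness/finite-index issue.
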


\begin{proof}
See \cite[Lemma 1]{Fu67} or \cite[Proposici\'on 17.6.48]{RzeVil2017}.
\end{proof}
\begin{gather*}
\xymatrix{
&&L_{\mc H}\ar@{<~>}[r]\ar@{-}[ld]\ar@{-}[dd]&{\mc H}\subseteq C_L\\
\N_{L/K}({\mc H})\ar@{<~>}[r]&K_0\ar@{-}[d]\\ & K\ar@{-}[r]&L
}
\end{gather*}

\begin{definition}\label{D3.1} Let $K$ be a global function field
and let $S$ be a finite nonempty set of prime divisors of $K$. The
{\em Hilbert class field} of $K$ relative to $S$, $H_{K,S}$, is the
maximal abelian unramified extension of $K$ such that every element
of $S$ decomposes fully in $H_{K,S}/K$.
\end{definition}

In the case of a number field $K$, the extended Hilbert class field
$H_K^+$ is, by
definition, the maximal abelian extension of $K$ such that every
finite prime of $K$ is unramified in $H_K^+$. We have that $H_K^+$
corresponds to the id\`ele group $J_K^{1_+}$, that is, a place
$\pK$ of $K$ decomposes fully in $H_K^+/K$ if and only if
$\pK$ is principal generated by a totally positive element.
So, a concept of ``{\em totally positive}'' should be developed
in the function field case.

Let $k=\F(T)$ be a fixed global rational function field over the field of $q$
elements $\F$. Let $\p$ be the infinite prime of $k$, that is, the pole of
$T$ and let $k_{\infty}\cong \F\big(\big(\frac{1}{T}\big)\big)$ be the
completion of $k$ at $\p$. Let $x\in \*{k_{\infty}}$. Then $x$ is
written uniquely as
\[
x=\Big(\frac 1T\Big)^{n_x} \lambda_x\varepsilon_x
\quad \text{with} \quad n_x\in
{\ma Z},\quad \lambda_x\in \*{\F}\quad\text{and}\quad \varepsilon_x
\in U_{\infty}^{(1)},
\]
where $U_{\infty}^{(1)}=U_{\p}^{(1)}$. We write $\pi_{\infty}:=1/T$, 
which is a uniformizer at $\p$.

\begin{definition}\label{D3.2} The {\em sign function} is defined
as $\phi_{\infty}\colon \*{k_{\infty}}\lra \*{\F}$ given by $\phi_{\infty}
(x)=\lambda_x$ for $x\in \*{k_{\infty}}$. The value $\phi_{\infty}(x)$
is called the ``{\em sign}'' of $x$. We also write $\sgn(x)=\phi_{
\infty}(x)$.
\end{definition}

We have that $\phi_{\infty}$ is an epimorphism and $\ker \phi_{\infty}=
\langle\pi_{\infty}\rangle\times U_{\infty}^{(1)}$. For example, if $M\in
R_T=\F[T]$, $M\neq 0$, say $M$ is of degree $d$ and its leader coefficient
is $a_d\in \*{\F}$, then $\sgn(M)=a_d$.

\begin{definition}\label{D3.3} 
Let $L$ be a finite separable extension of $k_{\infty}$. We define
the {\em sign of $\*L$} by the morphism $\phi_L:=\phi_{\infty}\circ
\N_{L/k_{\infty}}\colon \*L\lra \*{\F}$.
\end{definition}

\begin{proposition}\label{P3.4}
Let $k_{\infty}\subseteq E\subseteq L$. Then we have
$\phi_L=\phi_E\circ \N_{L/E}$ and $\N_{L/E}(\ker \phi_L)
\subseteq \ker\phi_E$.
\end{proposition}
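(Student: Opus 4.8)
The plan is to reduce both assertions to the transitivity of the norm in the tower $k_{\infty}\subseteq E\subseteq L$. First I would invoke the standard fact that the norm is multiplicative in stages, so that $\N_{L/k_{\infty}}=\N_{E/k_{\infty}}\circ\N_{L/E}$ as homomorphisms $\*L\lra \*{k_{\infty}}$. This is really the only genuine ingredient; everything else is formal bookkeeping with the definitions of the sign maps.

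For the first identity I would compose this relation on the left with the sign function $\phi_{\infty}$. Using Definition \ref{D3.3}, which gives $\phi_L=\phi_{\infty}\circ\N_{L/k_{\infty}}$ and $\phi_E=\phi_{\infty}\circ\N_{E/k_{\infty}}$, one obtains
\[
\phi_L=\phi_{\infty}\circ\N_{L/k_{\infty}}=\phi_{\infty}\circ\N_{E/k_{\infty}}\circ\N_{L/E}=\phi_E\circ\N_{L/E},
\]
which is exactly the claimed equality of maps $\*L\lra \*{\F}$.

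The second assertion then follows with no further work. Given $x\in\ker\phi_L$, the first identity yields $\phi_E\big(\N_{L/E}(x)\big)=\phi_L(x)=1$, so $\N_{L/E}(x)\in\ker\phi_E$; hence $\N_{L/E}(\ker\phi_L)\subseteq\ker\phi_E$. I do not expect any real obstacle here: the only points deserving a word of care are that the maps are taken on the multiplicative groups $\*L$, $\*E$, $\*{k_{\infty}}$, where norm transitivity is the classical statement, and that $\phi_{\infty}$ is a group homomorphism, so that composing it with the staged norm is legitimate.
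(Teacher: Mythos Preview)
Your argument is correct: transitivity of the norm in the tower $k_{\infty}\subseteq E\subseteq L$ together with Definition~\ref{D3.3} immediately gives $\phi_L=\phi_E\circ\N_{L/E}$, and the kernel inclusion is then automatic. The paper itself does not supply a proof but merely cites \cite[Remarque 1.1.1]{AnJa2000}, so your proposal in fact provides the details that the paper omits; there is nothing more to it than what you wrote.
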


\begin{proof} \cite[Remarque 1.1.1]{AnJa2000}.
\end{proof}

\begin{definition}\label{D3.5}
Let $L/k$ be a finite separable extension and let $\P$ be the set
of infinite places of $L$, that is $\P$ is the set of primes $\pK$
of $L$ such that $\pK|\p$.
Let $L_v$  the completion of $L$ at $v\in\P$. We have
that $\phi_{L_v}(x)$ is well defined for $x\in\*L$. The element $x\in\*L$ is
called {\em totally positive} if $\phi_{L_v}(x)=1$ for all $v\in\P$.
We define $L^+=\{x\in \*L\mid \text{$x$ is totally positive}\}$.
\end{definition}

\begin{definition}\label{D3.6}
The {\em group of signs} of a global function field is defined by $\Sig_L:=
\*L/L^+$. For a subgroup $R$ of $\*L$ we define $\Sig_L(R):=R L^+/L^+$,
so $\Sig_L(\*L)=\Sig_L$.
\end{definition}

\begin{proposition}\label{P3.7}
We have $\Sig_L(\*L)\cong\prod_{v\in\P}\phi_{L_v}(\*{L_v})$.
\end{proposition}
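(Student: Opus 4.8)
The plan is to build the isomorphism directly from the diagonal sign map and to pin down its surjectivity by a weak approximation argument.

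First I would introduce the group homomorphism
\[
\Phi\colon \*L \lra \prod_{v\in\P}\phi_{L_v}(\*{L_v}),
\qquad \Phi(x)=\big(\phi_{L_v}(x)\big)_{v\in\P},
\]
where each $x\in\*L$ is viewed in $\*{L_v}$ through the diagonal embedding $L\hookrightarrow L_v$; it is a homomorphism because every $\phi_{L_v}$ is one (Definition \ref{D3.3}). By Definition \ref{D3.5}, an element $x$ lies in $\ker\Phi$ precisely when $\phi_{L_v}(x)=1$ for all $v\in\P$, i.e. when $x\in L^+$. Thus $\ker\Phi=L^+$, and $\Phi$ induces an injection $\Sig_L(\*L)=\*L/L^+\hookrightarrow\prod_{v\in\P}\phi_{L_v}(\*{L_v})$. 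It remains to prove that $\Phi$ is onto.

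The key preliminary observation is that each sign map is continuous when $\*{\F}$ carries the discrete topology. Indeed $\ker\phi_{\infty}=\langle\pi_{\infty}\rangle\times U_{\infty}^{(1)}$ is an open subgroup of $\*{k_{\infty}}$ since it contains the open neighbourhood $U_{\infty}^{(1)}$ of $1$; hence every fibre of $\phi_{\infty}$, being a coset of this kernel, is open and $\phi_{\infty}$ is continuous. Composing with the continuous norm $\N_{L_v/k_{\infty}}\colon\*{L_v}\lra\*{k_{\infty}}$ shows that $\phi_{L_v}=\phi_{\infty}\circ\N_{L_v/k_{\infty}}$ is continuous for every $v\in\P$.

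Finally I would appeal to weak approximation. Fix a target $(a_v)_{v\in\P}$ with $a_v\in\phi_{L_v}(\*{L_v})$, and choose $y_v\in\*{L_v}$ with $\phi_{L_v}(y_v)=a_v$. Since $\P$ is finite and its places are pairwise inequivalent, $L$ is dense in $\prod_{v\in\P}L_v$ under the diagonal embedding, so there is some $x\in L$ as close to each $y_v$ in $L_v$ as we please. By the continuity just established, taking $x$ close enough to every $y_v$ forces $\phi_{L_v}(x)=\phi_{L_v}(y_v)=a_v$; since each $a_v\neq 0$ this also gives $x\neq 0$, so $x\in\*L$ and $\Phi(x)=(a_v)_{v\in\P}$. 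Hence $\Phi$ is surjective and the induced map yields $\Sig_L(\*L)\cong\prod_{v\in\P}\phi_{L_v}(\*{L_v})$. I expect the genuine work to be concentrated in this last step: turning the approximate equalities produced by weak approximation into the exact equalities of signs, which is exactly what the local constancy of the maps $\phi_{L_v}$ secures.
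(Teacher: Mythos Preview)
Your argument is correct. The map $\Phi$ is the natural one, the identification $\ker\Phi=L^+$ is immediate from Definition~\ref{D3.5}, and the surjectivity via weak approximation together with the local constancy of each $\phi_{L_v}$ (continuous map to a discrete target) is the right mechanism. The only detail worth making slightly more explicit is that weak approximation produces $x\in L$, and one then needs $x\in L_v^*$ for $\phi_{L_v}(x)$ to be defined; but since $L_v^*$ is open in $L_v$ and $y_v\in L_v^*$, this follows once $x$ is close enough, which is essentially what you say.

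As for comparison with the paper: the paper does not supply its own argument for this proposition but simply cites \cite[Lemme 1.2.1]{AnJa2000}. Your proof is thus an independent, self-contained verification, and the approach you take---diagonal sign map, kernel identification, weak approximation for surjectivity---is the standard and expected one, almost certainly the same as in the cited source.
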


\begin{proof}
\cite[Lemme 1.2.1]{AnJa2000}.
\end{proof}

Let ${\mc O}_L=\{x\in L\mid v_{\pK}(x)\geq 0\text{\ for all $\pK\notin \P$}\}$.
Let $P_L^+=\{x{\mc O}_L\mid x\in L^+\}$ be the principal ideals
generated by a totally positive element and we define the {\em extended
ideal class group} by
\[
Cl_L^{\ext}=Cl_L^+=I_L/P_L^+,
\]
where $I_L$ is the group of fractional ideals of ${\mc O}_L$. We have 
the ideal class group $Cl_L=
I_L/P_L$ where $P_L=\{(x)=x{\mc O}_L\mid x\in \*L\}$ and
$|Cl_L|<\infty$.

\begin{definition}\label{D3.8}
We define the following subgroups of the group of id\`eles $J_L$ as:
\begin{gather*}
U_L:=\prod_{v\in\P}\*{L_v}\times \prod_{v\notin \P} U_{L_v},\\
U_L^+:=\prod_{v\in\P}\ker \phi_{L_v}\times \prod_{v\notin \P} U_{L_v}.
\end{gather*}
\end{definition}

The groups $U_L L^*$ and $U_L^+ L^*$ are open subgroups of 
$J_L$, the id\`ele group
of $L$. The groups $U_L L^*$ and $U_L^+ L^*$ correspond to 
$J_L^1$ and $J_L^{1_+}$, the id\`ele
groups congruent to the modulus $1$ and $1^+$
respectively in the number field case.

From the canonical isomorphism $I_L\cong J_L/U_L$ we obtain
\begin{gather*}
Cl_L\cong J_L/U_L\*L
\intertext{and since we may find principal id\`eles with arbitrary signs
at the infinite places, it follows that}
Cl_L^+\cong J_L/U_L^+\*L.
\end{gather*}

For $S=\P$ we denote $H_L=H_{L,S}$ the Hilbert Class Field of the
the global function field $L$. By class field theory we have
\[
\Gal(H_L/L)\cong Cl_L\cong J_L/U_L\*L.
\]

\begin{definition}\label{D3.9}
Let $H_L^+=H_L^{\ext}$ be the abelian extension of the global function
field $L$ corresponding to the id\`ele subgroup $U_L^+ \*L$ of $J_L$. The field
$H_L^+$ is called {\em the extended Hilbert Class Field} of $L$
corresponding to ${\mc O}_L$.
\end{definition}

We have that $H_L^+/L$ is an unramified extension at the finite prime
divisors of $L$, $H_L\subseteq H_L^+$ and
\[
\Gal(H_L^+/L)\cong J_L/U_L^+\*L\cong I_L/P_L^+=
Cl_L^+= Cl_L^{\ext}.
\]

Now, we have
\[
\frac {U_L}{U_L^+}\cong \prod_{v\in\P}\frac{\*{L_v}}{\ker\phi_{L_v}}
\cong \prod_{v\in\P}\phi_{L_v}(\*{L_v})\cong \frac{\*L}{L^+}.
\]

\begin{remark}\label{R3.10}{\rm{
We have $[H_L^+:H_L]||\Sig(\*L)|=\prod_{v\in\P}|\phi_{L_v}(\*{L_v})|
|(q-1)^{|S|}$.
}}
\end{remark}

It follows that $H_L^+/H_L$ is unramified at the finite primes and 
tamely ramified at the infinite primes. Furthermore
\[
\Gal(H_L^+/H_L)\cong \ker\Big(\Gal(H_L^+/L)
\xrightarrow{\rest}{}\Gal(H_L/L)\Big)\cong
\frac{U_L\*L}{U_L^+\*L}\cong \frac{\Sig_L}{\Big(\frac{U_L\cap U_L^+
\*L}{U_L^+}\Big)}.
\]

\begin{proposition}\label{P3.11}
Let $L/E$ be a finite separable extension of global function fields. Then
$H_E^+\subseteq H_L^+$. If $L/E$ is a Galois extension, then
$H_L^+/E$ is also Galois. 
\end{proposition}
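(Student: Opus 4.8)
The plan is to read off both assertions from the defining id\`ele norm group $U_L^+\*L$ of $H_L^+$ (Definition \ref{D3.9}), using Proposition \ref{P3.0} for the first part and Galois stability of this group for the second. Throughout I use that, in the setting of the paper, $k\subseteq E\subseteq L$, so that every infinite place lies over $\p$ and the sign maps of Definition \ref{D3.3} are available. The one input that does the real work at the infinite places is Proposition \ref{P3.4}; everything at the finite places reduces to the fact that local norms send units to units.

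For $H_E^+\subseteq H_L^+$ I would first show $LH_E^+\subseteq H_L^+$ and then note $H_E^+\subseteq LH_E^+$. Since $H_E^+/E$ is abelian with norm group $U_E^+\*E$, Proposition \ref{P3.0} (applied to $H_E^+/E$ and the extension $L/E$) gives that $LH_E^+/L$ corresponds to $\N_{L/E}^{-1}(U_E^+\*E)$. Comparing norm groups, $LH_E^+\subseteq H_L^+$ is equivalent to $U_L^+\*L\subseteq \N_{L/E}^{-1}(U_E^+\*E)$, i.e. to $\N_{L/E}(U_L^+\*L)\subseteq U_E^+\*E$. Now $\N_{L/E}(\*L)\subseteq\*E$ on principal id\`eles, and on $U_L^+$ I would argue componentwise: the $w$--component of $\N_{L/E}$ of an id\`ele is $\prod_{v\mid w}\N_{L_v/E_w}(x_v)$. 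At a finite place $w\nmid\p$ each local norm sends $U_{L_v}$ into $U_{E_w}$ because $v_{E_w}\circ\N_{L_v/E_w}=f(v\mid w)\,v_{L_v}$; at an infinite place $w\mid\p$ each local norm sends $\ker\phi_{L_v}$ into $\ker\phi_{E_w}$, which is exactly Proposition \ref{P3.4}. As $\ker\phi_{E_w}$ and $U_{E_w}$ are groups, the products stay inside them, giving $\N_{L/E}(U_L^+)\subseteq U_E^+$ and hence the claim.

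For the Galois statement, let $\sigma\in\Gal(L/E)$. Since $\sigma$ fixes $k$ it fixes $\p$, so it permutes the places of $L$ preserving the partition into finite and infinite places and inducing $k_\infty$--isomorphisms $\sigma\colon L_v\lra L_{\sigma v}$. At finite $v$ this carries $U_{L_v}$ onto $U_{L_{\sigma v}}$. At infinite $v$, because $\sigma$ is the identity on $k_\infty$ it commutes with the norm to $k_\infty$, so $\phi_{L_{\sigma v}}\circ\sigma=\phi_\infty\circ\N_{L_{\sigma v}/k_\infty}\circ\sigma=\phi_\infty\circ\N_{L_v/k_\infty}=\phi_{L_v}$, whence $\sigma(\ker\phi_{L_v})=\ker\phi_{L_{\sigma v}}$. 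Therefore $\sigma(U_L^+)=U_L^+$, and since $\sigma(\*L)=\*L$ we get $\sigma(U_L^+\*L)=U_L^+\*L$. The class field of $L$ attached to this group is thus $\sigma$--stable, i.e. $\sigma(H_L^+)=H_L^+$ for every $\sigma\in\Gal(L/E)$; as $L/E$ is Galois this says $H_L^+/E$ is normal, and it is separable being the compositum of the separable extensions $H_L^+/L$ and $L/E$. Hence $H_L^+/E$ is Galois.

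The routine points are the finite places and the principal id\`eles; the only genuine content sits at the infinite places. For the inclusion it is packaged in Proposition \ref{P3.4}, and for normality it is the compatibility $\phi_{L_{\sigma v}}\circ\sigma=\phi_{L_v}$, both resting on the tower formula $\phi_L=\phi_E\circ\N_{L/E}$ for the sign maps. The main thing to be careful about is the direction of the norm-group correspondence (larger field corresponds to smaller norm group) when deducing $LH_E^+\subseteq H_L^+$ from $\N_{L/E}(U_L^+\*L)\subseteq U_E^+\*E$.
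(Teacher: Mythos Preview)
Your proof is correct and follows essentially the same approach as the paper: both parts rest on the id\`ele description $U_L^+\*L$ of $H_L^+$, using Proposition~\ref{P3.0} together with $\N_{L/E}(U_L^+)\subseteq U_E^+$ (via Proposition~\ref{P3.4} at the infinite places) for the inclusion, and the Galois stability $\sigma(\ker\phi_{L_v})=\ker\phi_{L_{\sigma v}}$, $\sigma(U_L^+\*L)=U_L^+\*L$ for normality. The only cosmetic difference is that the paper begins the Galois part with an arbitrary $E$--embedding $\sigma\colon H_L^+\to\overline{H_L^+}$ and then restricts to $L$, whereas you start from $\sigma\in\Gal(L/E)$ and invoke that the class field is determined by its norm group; both packagings are standard and equivalent.
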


\begin{proof}
We have
\[
\xymatrix{
& H_E^+\ar@{-}[r]\ar@{-}[d] & LH_E^+\ar@{-}[d]\\
& H_E^+\cap L\ar@{-}[r]\ar@{-}[dl]&L\\ E}
\]
Then $H_E^+/H_E^+\cap L$ is an abelian extension unramified
at the finite primes so that $LH_E^+/L$ is also unramified at the
finite primes.

We consider the norm $\N_{L/E}\colon C_L=J_L/\*L\lra C_E=J_E/\*E$
of id\`ele's groups and since $\N_{L_w/E_v}(\ker \phi_{L_w})\subseteq
\ker \phi_{E_v}$, $\N_{L/E}(U_L^+)\subseteq U_E^+$ and $\N_{L/E}(
\*L)\subseteq \*E$, it follows $\N_{L/E}(U_L^+\*L)\subseteq U_E^+\*E$.
Now the group $U_E^+\*E$ corresponds to the field $H_E^+$.
From Proposition \ref{P3.0}, we have that 
the norm group of $LH_E^+$ is $N_{L/K}^{-1}(U_E^+\*E)\supseteq
U_L^+\*L$. Hence $LH_E^+\subseteq H_L^+$ and thus
$H_E^+\subseteq H_L^+$.

Now, if $L/E$ is a Galois extension, we consider $\sigma\colon
H_L^+\lra \overline{H_L^+}$ a monomorphism such that $\sigma|_E
=\Id_E$. Here $\overline{H_L^+}$ denotes a fixed algebraic closure
of $H_L^+$. Since $L/E$ is Galois, we have $\sigma|_L\colon L\lra \bar{L}$
and since $\sigma|_E=\Id_E$, it follows that $\sigma(L)=L$. On the
other hand, since $\sigma(\ker \phi_{L_w})=\ker \phi_{L_{\sigma(w)}}$
we have $\sigma(U_L^+)=U_L^+$ and $\sigma(\*L)=\*L$ so that
$\sigma(U_L^+\*L)=U_L^+\*L$. Therefore $\sigma(H_L^+)=H_L^+$,
that is, $\sigma\in\Aut_E(H_L^+)$ and hence $H_L^+/E$ is a
Galois extension.
\end{proof}

A general property of the HCF (resp. of the extended HCF) is given
in the following result.

\begin{proposition}\label{P3.14*}
Let $L/k$ be a finite separable extension of global function fields.
Let ${\mc O}_L$ be the integral closure of $\F[T]$ in $L$. Then a
fractional ideal $\pK$ of ${\mc O}_L$ decomposes fully in
\las
\item $H_L/L$ if and only if $\pK$ is principal;

\item $H_L^+/L$ if and only if $\pK$ is principal generated
by an element of $L^+=\{x\in L\mid \phi_{L_{\pK}}(x)=1 \text{\ 
for all\ }\pK|\p\}$.
\end{list}
\end{proposition}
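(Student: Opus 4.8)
The plan is to prove both statements by translating the splitting condition into the language of idèle class groups via the Artin reciprocity map, and then reading off the norm-group description of the two Hilbert class fields established in Definitions \ref{D3.1} and \ref{D3.9}. Recall that $H_L$ corresponds to the open subgroup $U_L\*L$ of $J_L$ and $H_L^+$ corresponds to $U_L^+\*L$, and that we have the canonical isomorphisms $Cl_L\cong J_L/U_L\*L$ and $Cl_L^+\cong J_L/U_L^+\*L$. First I would recall the general local-global splitting criterion: for an abelian extension $R/L$ with norm group ${\mc N}=\N_{R/L}(C_R)$, a prime $\pK$ of $L$ splits completely in $R$ if and only if the local units at $\pK$, embedded in $C_L$, together with the image of a uniformizer $\pi_{\pK}$, lie in ${\mc N}$; equivalently, the local component at $\pK$ of the idèle class group maps trivially under the Artin map $C_L\to\Gal(R/L)$. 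Since the primes in question are the finite primes (those not lying over $\p$), the distinction between $H_L$ and $H_L^+$ lives entirely in the infinite components, so the same local analysis at a finite $\pK$ will serve both parts.

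For part (1), I would argue as follows. A finite prime $\pK$ splits completely in $H_L/L$ exactly when its associated idèle—namely the idèle with a uniformizer $\pi_{\pK}$ in the $\pK$-component and $1$ elsewhere—lies in the norm group $U_L\*L$. Since the canonical map $J_L\to I_L$ sends $U_L$ to the trivial ideal, the image of this idèle in $I_L/P_L\cong Cl_L$ is precisely the ideal class of $\pK$. Hence $\pK$ maps into $U_L\*L$ if and only if the class of $\pK$ is trivial in $Cl_L$, i.e. if and only if $\pK$ is principal. This is exactly the classical statement that the Hilbert class field is where the principal primes split, and it follows directly from $\Gal(H_L/L)\cong Cl_L$.

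For part (2), the same idèle-theoretic computation applies, but now with the finer group $U_L^+\*L$ in place of $U_L\*L$. The prime $\pK$ splits completely in $H_L^+/L$ if and only if its uniformizer idèle lies in $U_L^+\*L$. Using the isomorphism $Cl_L^+\cong J_L/U_L^+\*L$ and the fact that $J_L\to I_L$ carries the uniformizer idèle to $\pK$, this condition becomes the triviality of the class of $\pK$ in $Cl_L^+=I_L/P_L^+$. By the definition of $P_L^+$ as the group of principal ideals generated by totally positive elements, this is equivalent to $\pK$ being principal with a totally positive generator, i.e. generated by some $x\in L^+$. The description of $L^+$ in the statement matches Definition \ref{D3.5}, since the $\phi_{L_{\pK}}=\phi_{L_v}$ are exactly the local sign functions at the infinite places.

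The main subtlety—more bookkeeping than genuine obstacle—will be keeping the idèle-to-ideal translation honest, in particular verifying that the image in $Cl_L$ (resp. $Cl_L^+$) of the uniformizer idèle at $\pK$ is genuinely the class of the prime ideal $\pK$ and that multiplying by elements of $U_L^+\*L$ corresponds precisely to adjusting by principal totally positive ideals. This rests on the remark preceding Definition \ref{D3.9} that one may find principal idèles of arbitrary sign at the infinite places, which is what lets $U_L^+\*L$ capture exactly the totally positive principal ideals and no more; I would invoke that point explicitly to rule out any loss of information at the infinite components when passing from idèles to ideals.
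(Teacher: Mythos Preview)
Your proposal is correct and follows essentially the same approach as the paper: both invoke the id\`ele-theoretic splitting criterion (a finite prime $\pK$ splits completely in the class field attached to ${\mc H}$ iff the local embedding $\*{L_{\pK}}\hookrightarrow J_L$ lands in ${\mc H}$) and then identify ${\mc H}$ with $U_L\*L$ (resp.\ $U_L^+\*L$). The only difference is packaging: the paper unpacks the equation $\theta\lceil x\rceil_{\pK}=\beta_x\vec\alpha_x$ componentwise to exhibit the generator $\beta_x$ explicitly, while you invoke the already-established isomorphisms $J_L/U_L\*L\cong Cl_L$ and $J_L/U_L^+\*L\cong Cl_L^+$ to read off the ideal class of $\pK$ directly; both routes are equivalent and equally rigorous.
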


\begin{proof}
From class field theory, we
have that $\pK$ decomposes fully in $H_L/L$
(resp. in $H_L^+/L$) if and only if $\theta\lceil
\*{L_{\pK}}\rceil_{\pK}:=\{(\ldots, 1, x, 1,\ldots)\mid x\in\*{L_{\pK}}\}
\subseteq \*L U_L$ (resp. $\subseteq \*L U_L^+$)
(see \cite[III, Theorem 8.3]{Neu69} or
\cite[Corolario 17.6.47]{RzeVil2017}) if and only if for all $x\in\*{L_{\pK}}$
there exist $\beta_x\in\*L$ and $\vec\alpha_x\in U_L$ (resp.
$\vec\alpha_x\in U_L^+$) such that
$\theta\lceil x\rceil_{\pK}=\beta_x\vec\alpha_x$.
Thus $\vec\alpha_x=(\ldots, \beta_x^{-1},\ldots, \beta_x^{-1},\beta_x^{-1}x,
\beta_x^{-1},\ldots,\beta_x^{-1},\ldots)$. This is equivalent to $v_{\mathfrak q}
(\beta_x)=0$ for all ${\mathfrak q}\neq \pK$ and ${\mathfrak q}\nmid\p$
and $\big(\beta_x^{-1}\big)_{{\mathfrak q}|\p}\in \prod_{{\mathfrak
q}|\p}\*{L_{\mathfrak q}}$ (resp. $\big(\beta_x^{-1}\big)_{{\mathfrak 
q}|\p}\in \prod_{{\mathfrak q}|\p}\ker\phi_{L_{\mathfrak q}}$).

Let $x\in\*{L_{\pK}}$. The principal ideal in ${\mc O}_L$ generated by
$\beta_x$ satisfies $\langle \beta_x
\rangle=\pK^{n_x}=\beta_x{\mc O}_L$. In particular for $x\in\*{L_{\pK}}$ with
$v_{\pK}(\beta_x)=1$ we have $\langle \beta_x\rangle=\pK$, $\beta_x\in
\*L$. Hence $\pK$ is a principal ideal $\pK=\langle \beta_x\rangle$ (resp.
$\pK$ is a principal ideal $\pK=\langle \beta_x\rangle$
and $\beta_x\in\ker\phi_{L_{\mathfrak q}}$ for all ${\mathfrak q}|\p$).
\end{proof}

\begin{definition}\label{D3.15}
Let $L/K$ be a finite separable extension of global function fields. We
define the {\em extended genus field} $L_{gex,K}$ of $L$
with respect to $K$ as the maximal extension of $L$ 
contained in $H_L^+$ that is of the form $LK_1^+$ where $K_1^+/
K$ is an abelian extension. The maximal field $K_1^+$ that
satisfies $L_{gex,K}=LK_1^+$ will be denoted by $H_{L/K}^+$
In this way, $H_{L/K}^+$ is the maximal abelian extension of $K$ such
that $L_{gex, K}=LH_{L/K}^+$.
\end{definition}

\[
\xymatrix{
L\ar@{-}[r]\ar@{-}[d]&LH_{L/K}^+=L_{gex, K}\ar@{-}[r]\ar@{-}[d]
&H_L^+\\
K\ar@{-}[r]&H_{L/K}^+
}
\]

\begin{remark}\label{R3.16}{\rm{
When $L/K$ is abelian, $L_{gex,K}$ is the maximal abelian
extension of $K$ contained in $H_L^+$.

We also have $H_{L/K}^+=\ge{(H_{L/K}^+)}$.
}}
\end{remark}

When $K=k=\F(T)$ and $L/k$ is a finite separable extension
we have $\ge L=L_{{\eu{gex}},k}$.

\begin{remark}\label{R3.11-1*}{\rm{
We have that if $L/E$ is a finite separable extension of global function fields, then
$\g E\subseteq \g L$, $\ge E\subseteq \ge L$, $H_E\subseteq H_L$
and $H_E^+\subseteq H_L^+$.
We also have $\ge K=\ge{(\ge K)}$.
}}
\end{remark}

\begin{remark}\label{R3.17}{\rm{
In \cite{RaRzVi2019} we defined the {\em extended genus field}
$K^{\ext}$ for
a finite abelian extension $K/k$ as $K^{\ext}:=\ge E K= E^{\ext} K$ 
where $E=KM\cap \cicl N{}$
is the corresponding cyclotomic function field and $\ge E
=E^{\ext}$ is defined
as the maximal cyclotomic extension containing $E$ such
that the finite primes are unramified (we prove in 
Theorem \ref{T4.4} that this agrees
with Definition \ref{D3.15} in case $K$ is contained in
a cyclotomic function field). 
}}
\end{remark}

\section{Extended genus fields in the cyclotomic case}\label{S4}

Let $E\subseteq \cicl N{}$ be a cyclotomic function field.
We have $\Gal(\cicl N{}/k)\cong \big(R_T/\langle N\rangle\big)^*$.
Let $X$
be the group of Dirichlet characters associated to $E$, that is,
\begin{align*}
X=\{\chi\colon (R_T/\langle N\rangle)^{\ast}\lra \*{\ma C}\mid &
\text{$\chi$ is a group homomorphism}\\
&\text{such that $\Gal(\cicl N{}/E)
\subseteq \ker\chi$}\}
\end{align*}
and $E=\cicl N{}^H$ is the fixed field under 
$H$, where $H=\bigcap_{\chi\in X}\ker \chi$. We have
$X\cong \widehat{\Gal(E/k)}=\Hom(\Gal(E/k), \*{{\ma C}})$
the group of characters of $\Gal(E/k)$. The maximal
cyclotomic extension of $E$ unramified at the finite primes is
the field $E^{\ext}$ associated to $Y:=\prod_{P\in R_T^+} X_P$ where
$X_P=\{\chi_P\mid \chi\in X\}$ and $\chi_P$ denotes the
$P$--th component of $\chi$ (see \cite{MaRzVi2013}).

The aim of this section is to prove that $E^{\ext}=\ge E$. In fact, in
\cite{RaRzVi2019} the extended genus field of $E/k$ was defined as
$E^{\ext}$ and it was denoted as $\ge E$ in that paper.

To begin with, we first find the id\`ele class group associated to
a cyclotomic function field $\cicl N{}$. Let $N=P_1^{\alpha_1}\cdots
P_r^{\alpha_r}$ be the decomposition of $N$ as a product of
irreducible monic polynomials.  Set $R_T'=R_T
\setminus\{P_1,\ldots,P_r\}$, $\pi=1/T=\pi_{\infty}$
and $U_{\infty}=U_{\p}$.

We define
\begin{gather}\label{ideleskN}
{\mc X}_N=\prod_{i=1}^r U_{P_i}^{(\alpha_i)}\times \prod_{P\in R_T'} U_P
\times [\langle\pi\rangle\times U_{\infty}^{(1)}].
\end{gather}

Note that ${\mc X}_N\*k/\*k\cong {\mc X}_N$.

Let $U_T:=\{\vec\alpha\in J_k\mid \alpha_{\p}=1 \text{\ and
$\alpha_P\in U_P$ for all $P\in R_T^+$}\}$.
D. Hayes \cite{Hay74} proved that $U_T\cong G_T=\Gal(
k_T/k)$ where $k_T:=\bigcup_{N\in R_T}\cicl N{}$.

\begin{proposition}\label{P4.1}
Let $U':=\prod_{P\in R_T}U_P\times 
[\langle\pi\rangle\times U_{\infty}^{(1)}] \subseteq J_k$.
Then there exists an epimorphism
$\psi_N\colon U'\lra
\Gal(\cicl N{}/k)=:G_N$ with $\ker \psi_N={\mc X}_N$
so that, $U'/{\mc X}_N\cong G_N$.
\end{proposition}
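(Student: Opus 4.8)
The plan is to construct the epimorphism $\psi_N$ by piecing together local maps at each prime and identifying its kernel with ${\mc X}_N$ using the structure of $\Gal(\cicl N{}/k)$. First I would recall that by Hayes' theory the Galois group $G_N=\Gal(\cicl N{}/k)\cong (R_T/\langle N\rangle)^*$, and that the arithmetic of cyclotomic function fields is governed by the local behavior of idèles at the primes $P_1,\dots,P_r$ dividing $N$ and at the infinite prime $\p$. Since $U_T\cong G_T=\Gal(k_T/k)$ via Hayes, the natural route is to use the restriction $G_T\twoheadrightarrow G_N$ and pull it back along $U'\to U_T$. The subtlety is that $U'$ contains the infinite component $[\langle\pi\rangle\times U_\infty^{(1)}]$, which $U_T$ does not; this component must be carried by $\psi_N$ onto the inertia contribution of $\p$ in $\cicl N{}/k$.

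The key steps, in order, are as follows. First I would define $\psi_N$ componentwise: at each finite prime $P\nmid N$ send $\vec\alpha$ to the (unramified) Frobenius-type contribution, which is trivial on $U_P$; at each $P_i\mid N$ use the local reciprocity map $\*{k_{P_i}}\to\Gal(\cicl N{}/k)$ whose restriction to the units $U_{P_i}$ detects the $P_i$-component of the character group and whose kernel on units is exactly $U_{P_i}^{(\alpha_i)}$ (because $\cicl{P_i}{\alpha_i}/k$ has conductor $P_i^{\alpha_i}$); and at $\p$ use that the inertia group of $\p$ in $\cicl N{}/k$ is trivial while the sign data is absorbed, so that $\langle\pi\rangle\times U_\infty^{(1)}$ lies in the kernel. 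Concretely, I would invoke the idelic reciprocity map and the conductor-discriminant description of $\cicl N{}$ to write $\psi_N$ as the product of local Artin symbols restricted to $U'$.

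Next I would verify surjectivity: since $G_N\cong\prod_{i=1}^r (R_T/\langle P_i^{\alpha_i}\rangle)^*$ by the Chinese Remainder Theorem, and each local unit group $U_{P_i}$ surjects onto $(R_T/\langle P_i^{\alpha_i}\rangle)^*$ with kernel $U_{P_i}^{(\alpha_i)}$, the product map hits all of $G_N$. Then I would compute the kernel: an idèle $\vec\alpha\in U'$ lies in $\ker\psi_N$ precisely when $\alpha_{P_i}\in U_{P_i}^{(\alpha_i)}$ for each $i$, when $\alpha_P\in U_P$ is unconstrained for $P\in R_T'$, and when the infinite component lies in $\langle\pi\rangle\times U_\infty^{(1)}$. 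Comparing with \eqref{ideleskN}, this is exactly ${\mc X}_N$, giving $\ker\psi_N={\mc X}_N$ and hence $U'/{\mc X}_N\cong G_N$ by the first isomorphism theorem.

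I expect the main obstacle to be the careful treatment of the infinite prime $\p$. One must show that although $\langle\pi\rangle$ (the uniformizer part) and $U_\infty^{(1)}$ contribute to the full cyclotomic tower $k_T$, they contribute trivially to $\cicl N{}$ itself for the purpose of this map, so that they sit inside the kernel rather than mapping onto something in $G_N$. This requires knowing precisely how $\p$ behaves in $\cicl N{}/k$ — namely that $\p$ is unramified and that the sign/constant-field contributions are not detected by the Galois group $(R_T/\langle N\rangle)^*$ as defined here. Establishing this cleanly, using Hayes' identification $U_T\cong G_T$ together with the decomposition of local units, is the delicate point; the finite-prime analysis is then a routine consequence of local class field theory and the conductor computation for Carlitz cyclotomic extensions.
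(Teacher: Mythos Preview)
Your approach differs substantially from the paper's and contains a factual error that undermines the key step.

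The paper does not invoke the Artin map or local class field theory at all for this proposition. It builds $\psi_N$ by hand: given $\vec\xi\in U'$, approximate each local unit $\xi_{P_i}\in U_{P_i}$ by a polynomial $Q_i\in R_T$ modulo $P_i^{\alpha_i}$ (density of $k$ in $k_{P_i}$), then use the Chinese Remainder Theorem to produce a single $C\in R_T$ with $C\equiv\xi_{P_i}\bmod P_i^{\alpha_i}$ for all $i$. One checks $\gcd(C,N)=1$ and that $C\bmod N$ is well defined, giving $\psi_N(\vec\xi)\in(R_T/\langle N\rangle)^*\cong G_N$. The components at $P\in R_T'$ and at $\p$ are simply discarded in this construction. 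Surjectivity and the kernel computation are then one line each. No reciprocity, no conductor theory, no analysis of $\p$ is needed.

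Your route via the idelic reciprocity map is conceptually reasonable, but your treatment of the infinite prime is wrong. You assert that ``the inertia group of $\p$ in $\cicl N{}/k$ is trivial'' and later that ``$\p$ is unramified''. This is false: $\p$ is tamely and totally ramified in $\cicl N{}/\cicl N{}^+$ with ramification index $q-1$ (and splits completely in $\cicl N{}^+/k$), so the inertia group at $\p$ is isomorphic to $\F^*$. Your justification for why $\langle\pi\rangle\times U_\infty^{(1)}$ lies in the kernel therefore collapses. The correct argument along your lines would be: the extension at $\p$ is tamely ramified, so $U_\infty^{(1)}$ lies in the local norm group (hence in the kernel of the local Artin symbol), and $\pi=1/T$ is itself a local norm from $k_\infty(\sqrt[q-1]{-1/T})$, so $\langle\pi\rangle$ is in the kernel as well. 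The paper establishes precisely this norm fact later (Proposition~\ref{P5.0}), but for the present proposition it bypasses the issue entirely with the elementary CRT construction.

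There is also a minor incoherence: you propose to ``pull back along $U'\to U_T$'', but $U_T$ has trivial component at $\p$ while $U'$ carries $\langle\pi\rangle\times U_\infty^{(1)}$, so there is no natural map in that direction; the projection goes the other way.
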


\begin{proof}
Let $\vec\xi\in U'$. Then 
$\xi_{P_i}\in U_{P_i}=\{\sum_{j=0}^{\infty} a_jP_i^j\mid a_j\in R_T/
\langle P_i\rangle, a_0\neq 0\}$,
$1\leq i\leq r$. Since $k\subseteq k_{P_i}$ 
is dense, there exists $Q_i\in R_T$ with
$Q_i\equiv \xi_{P_i}\bmod P_i^{\alpha_i}$. From the Chinese Residue
Theorem, there exists $C\in R_T$ such that 
$C\equiv Q_i\bmod P_i^{\alpha_i}$, $1\leq i\leq r$
and therefore $C\equiv \xi_{P_i}\bmod P_i^{\alpha_i}$, $1\leq i\leq r$.

Now, if $C_1\in R_T$ satisfies $C_1\equiv \xi_{P_i}
\bmod P_i^{\alpha_i}$, $1\leq i\leq r$, then $P_i^{\alpha_i}
|C-C_1$ for $1\leq i\leq r$. It follows that $N|C-C_1$ so
that $C\in R_T$ is unique modulo $N$. On the other hand,
$v_{P_i}(\xi_{P_i})=0$, so that $P_i\nmid \xi_{P_i}$.
It follows that $\gcd(C,N)=1$. In this way we obtain that
$C\bmod N$ defines an element of $G_N$.

Given $\sigma\in G_N$, there exists $C\in R_T$ such that $\sigma
\lambda_N=\lambda_N^C$ where $\lambda_N$
is a generator of $\Lambda_N$. Let $\vec\xi\in U'$
with $\xi_{P_i}=C$, $1\leq i\leq r$ and $\xi_P=1=\xi_{\infty}$
for all $P\in R_T'$. Hence $\vec\xi\mapsto C\bmod
N$ and $\psi_N$ is surjective. Finally, $\ker\psi_N
=\{\vec\xi\in U'\mid\xi_{P_i}\equiv 1 \bmod P_i^{\alpha_i},
1\leq i\leq r\}={\mc X}_N$. The result follows.
\end{proof}

Next, we will show that $U'/{\mc X}_N\cong J_k/{\mc X}_N\*k$.
We have the natural composition
\[
\xymatrix{
U'\ar@{^{(}->}[r]\ar@/_1pc/@{>}_{\mu}[rr]&
J_k\ar@{>>}[r]&J_k/{\mc X}_N \*k,
}
\]
with $\im \mu=U'{\mc X}_N\*k/{\mc X}_N\*k$ and $\ker \mu =
U'\cap {\mc X}_N\*k$.

Now, ${\mc X}_N\subseteq U'$ which implies ${\mc X}_N
\subseteq U'\cap {\mc X}_N\*k$. Conversely, if $\vec\xi
\in U'\cap {\mc X}_N\*k$, then the components of
$\vec\xi$ are given by
\begin{align*}
\xi_P&=a\cdot \beta_P, \quad P\in R_T, \quad
\vec\beta \in {\mc X}_N, \quad a\in \*k,\\
\xi_{\infty}&=a\cdot\beta_{\infty}, \quad
\beta_{\infty}\in \langle\pi\rangle\times U_{\infty}^{(1)}.
\end{align*}

Since $\xi_P,\beta_P\in U_P$ we have that $v_P(\xi_P)=
v_P(\beta_P)=0$ for all $P\in R_T$. It follows that 
$v_P(a)=0$ for all $P\in R_T$. Furthermore, since 
$\deg a=0$, we have $v_{\infty}(a)=0$ so that $a\in\*\F$.

Next, since $\xi_{\infty}, \beta_{\infty}
\in\langle\pi\rangle\times U_{\infty}^{(1)}=\ker \phi_{\infty}$,
it follows that $1=\phi_{\infty}(\xi_{\infty})=
\phi_{\infty} (a) \phi_{\infty} (\beta_{\infty})=
\phi_{\infty} (a)$. Thus
$a=1$. It follows that $\vec\xi\in{\mc X}_N$.
Therefore $\ker \mu={\mc X}_N$ and we obtain a monomorphism
$U'/{\mc X}_N\stackrel{\theta}
{\hooklongrightarrow} J_k/{\mc X}_N \*k$.

It remains to verify that $\theta$ is onto. We must prove
that $J_k=U'{\mc X}_N \*k =U'\*k$. 
We have that $U'$ corresponds to an abelian
extension of $k$ unramified at every finite prime. Let
$L/k$ be this extension. Now $U_{\infty}^{
(1)}$ corresponds to the first ramification group and
therefore corresponds to the wild ramification of $\p$.
It follows that in $L/k$ there is at most a unique 
ramified prime and this prime is tamely ramified and
of degree $1$ ($\p$). It follows that $L/k$
is an extension of constants 
(see \cite[Proposici\'on 10.4.11]{RzeVil2017}).

Finally, since $d=\min\{n\in{\ma N}\mid \deg\vec
\alpha=n, \vec\alpha\in U'\}=1$, the field of constants of
$L$ is $\F$ and therefore $L=k$. It follows that 
$\N_{L/k} C_L=C_k\cong \*kU'/\*k
\cong U'$, that is, $J_k/\*k\cong U'$. Hence
$J_k=\*k U'$.

We have proved

\begin{proposition}\label{P4.2}
Let $N=P_1^{\alpha_1}\cdots P_r^{\alpha_r}\in R_T$. 
Then the id\`ele subgroup ${\mc X}_N\*k$ of $J_k$
that corresponds to the cyclotomic function field
$\cicl N{}$ is the subgroup
\begin{gather*}
{\mc X}_N=\prod_{i=1}^r U_{P_i}^{(\alpha_i)}\times \prod_{P\in R_T'} U_P
\times [\langle\pi\rangle\times U_{\infty}^{(1)}]. 
\intertext{and consequently, the id\`ele class subgroup of $C_k$ corresponding
to $\cicl N{}$ is}
{\mc X}_N\*k/\*k\cong {\mc X}_N. \tag*{$\fin$}
\end{gather*}
\end{proposition}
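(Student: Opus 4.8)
The plan is to identify, by class field theory, the norm subgroup of $J_k$ attached to the cyclotomic extension $\cicl N{}/k$ and to show that it equals ${\mc X}_N\*k$. The engine is Proposition~\ref{P4.1}, which gives a surjection $\psi_N\colon U'\to G_N=\Gal(\cicl N{}/k)$ with kernel ${\mc X}_N$; its explicit recipe $\vec\xi\mapsto(C\bmod N)$, where $C\equiv\xi_{P_i}\bmod P_i^{\alpha_i}$ and $\sigma_C$ acts by $\lambda_N\mapsto\lambda_N^C$, is precisely the Carlitz reciprocity law, so $\psi_N$ is the restriction to $U'$ of the global Artin map $\omega_N\colon J_k\to G_N$. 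Its kernel is the norm subgroup ${\mc N}_N=\N_{\cicl N{}/k}(J_{\cicl N{}})\,\*k$, which contains $\*k$, so ${\mc X}_N={\mc N}_N\cap U'$, and it suffices to establish the two facts $U'\cap{\mc X}_N\*k={\mc X}_N$ and $J_k=U'\*k$.

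For the first, I would form the composite $\mu\colon U'\hookrightarrow J_k\twoheadrightarrow J_k/{\mc X}_N\*k$ and compute $\ker\mu=U'\cap{\mc X}_N\*k$. An element there is $\vec\xi=a\vec\beta$ with $a\in\*k$ and $\vec\beta\in{\mc X}_N$; since $\xi_P,\beta_P\in U_P$ we get $v_P(a)=0$ for every $P\in R_T$, so $a$ has no finite zeros or poles and hence $a\in\*\F$. Evaluating the sign at $\p$, where $\xi_\infty,\beta_\infty\in\langle\pi\rangle\times U_\infty^{(1)}=\ker\phi_\infty$, gives $1=\phi_\infty(\xi_\infty)=\phi_\infty(a)\phi_\infty(\beta_\infty)=\phi_\infty(a)=a$, so $a=1$. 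Thus $\ker\mu={\mc X}_N$ and $\mu$ induces an injection $\theta\colon U'/{\mc X}_N\hooklongrightarrow J_k/{\mc X}_N\*k$. The same computation shows ${\mc X}_N\cap\*k=1$, which already gives the stated isomorphism ${\mc X}_N\*k/\*k\cong{\mc X}_N$.

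The main obstacle is the surjectivity of $\theta$, equivalently $J_k=U'\*k$, and I would deduce it from class field theory together with the geometry of $k$. The open subgroup $U'\*k$ is the norm group of some finite abelian $L/k$; because every finite local component of $U'$ is the full unit group $U_P$, the extension is unramified at all finite primes, and because $U_\infty^{(1)}$ (the wild, first ramification part at $\p$) lies in $U'$, the prime $\p$ is at most tamely ramified. An abelian extension of $k$ ramified at most at the single degree-one prime $\p$, and there only tamely, is a constant extension (see \cite[Proposici\'on 10.4.11]{RzeVil2017}). Finally the degree map already attains the value $1$ on $U'$ at the component $\pi=1/T$ of $\p$, so the constant field of $L$ can only be $\F$; hence $L=k$, $\N_{L/k}C_L=C_k$, and $J_k=U'\*k$.

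Assembling the pieces, $J_k/{\mc X}_N\*k\cong U'/{\mc X}_N\cong G_N$; and since this isomorphism is induced by $\omega_N$ and $\*k\subseteq{\mc N}_N$, any $x\in{\mc N}_N\subseteq J_k=U'\*k$ written as $x=uc$ with $u\in U'$, $c\in\*k$ has $u=xc^{-1}\in{\mc N}_N\cap U'={\mc X}_N$, so ${\mc N}_N={\mc X}_N\*k$. Therefore ${\mc X}_N\*k$ is exactly the id\`ele subgroup corresponding to $\cicl N{}$, and passing to $C_k=J_k/\*k$ yields the id\`ele class subgroup ${\mc X}_N\*k/\*k\cong{\mc X}_N$. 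I expect the constant-extension step to be the crux, the rest being the valuation-and-sign bookkeeping built on Proposition~\ref{P4.1}.
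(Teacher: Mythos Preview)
Your proposal is correct and follows essentially the same route as the paper: the computation of $\ker\mu$ via valuations and the sign $\phi_\infty$, and the proof that $J_k=U'\*k$ by recognizing that $U'\*k$ cuts out an abelian extension unramified at the finite primes and at most tamely ramified at $\p$, hence a constant extension with constant field $\F$ because $\deg(\pi_\infty)=1$ is already attained in $U'$. The one point you make more explicit than the paper is the identification of $\psi_N$ with the restriction of the global Artin map via Carlitz--Hayes reciprocity; the paper leaves this implicit through the reference to Hayes \cite{Hay74} just before Proposition~\ref{P4.1}, and simply passes from $U'/{\mc X}_N\cong G_N$ and $U'/{\mc X}_N\cong J_k/{\mc X}_N\*k$ directly to the conclusion.
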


\begin{corollary}\label{C4.3}
The id\`ele subgroup corresponding to the real
cyclotomic function field $\cicl N{}^+$ is ${\mc X}_N^+\*k$ where
\[
{\mc X}_N^+=\prod_{i=1}^r U_{P_i}^{(\alpha_i)}\times \prod_{P\in R_T'} U_P
\times \*{k_{\infty}}.
\]
\end{corollary}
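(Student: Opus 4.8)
The plan is to combine Proposition \ref{P4.2}, which identifies ${\mc X}_N\*k$ as the id\`ele subgroup attached to $\cicl N{}$, with the classical description of the real cyclotomic field $\cicl N{}^+$ as the fixed field of the inertia group $I_\infty$ of the infinite prime $\p$, where $I_\infty\cong\*\F$ is the image of the scalars inside $\Gal(\cicl N{}/k)\cong(R_T/\langle N\rangle)^*$. Since $\cicl N{}/k$ is geometric ($\F$ is its full field of constants), the prime $\p$ is tamely ramified with $f_\infty(\cicl N{}|k)=1$, so the decomposition group $D_\infty$ equals $I_\infty$. Applying the Galois correspondence inside $J_k/{\mc X}_N\*k\cong\Gal(\cicl N{}/k)$, the field $\cicl N{}^+=\cicl N{}^{I_\infty}$ corresponds to the preimage of $I_\infty$ under the reciprocity isomorphism, and the whole argument reduces to identifying this preimage with ${\mc X}_N^+\*k$.

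Next I would compute the image of the local group $\*{k_{\infty}}$, embedded at $\p$, under $J_k\to J_k/{\mc X}_N\*k\cong\Gal(\cicl N{}/k)$; by local--global compatibility this image is $D_\infty$. Writing $\*{k_{\infty}}=\langle\pi\rangle\times\*\F\times U_{\infty}^{(1)}$ and recalling that the infinite component of ${\mc X}_N$ is exactly $\ker\phi_{\infty}=\langle\pi\rangle\times U_{\infty}^{(1)}$, the factor $\langle\pi\rangle\times U_{\infty}^{(1)}$ maps to the identity. Hence the image of $\*{k_{\infty}}$ coincides with the image of the sign subgroup $\*\F$ embedded at $\p$, and this image is $D_\infty=I_\infty$; in particular $\pi$ maps trivially, reconfirming $D_\infty=I_\infty$.

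The main obstacle is to show that the embedding $\iota_\infty\colon\*\F\hookrightarrow J_k$ at $\p$ induces an \emph{injection} into $J_k/{\mc X}_N\*k$, so that the image of $\*\F$ has full order $q-1=|I_\infty|$ and therefore equals $I_\infty$. For this I would take $a\in\*\F$ with $\iota_\infty(a)\in{\mc X}_N\*k$ and write $\iota_\infty(a)=\vec\beta\,c$ with $\vec\beta\in{\mc X}_N$ and $c\in\*k$. Comparing finite components gives $c=\beta_P^{-1}\in U_P$, so $v_P(c)=0$ for every $P\in R_T$; having no finite zeros or poles, $c$ must be a constant, $c\in\*\F$. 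At each $P_i$ we have $c\in U_{P_i}^{(\alpha_i)}=1+P_i^{\alpha_i}$, and reducing modulo $P_i$ forces the constant $c$ to equal $1$. Finally, comparing signs at $\p$ and using $\phi_{\infty}(a)=a$, $\phi_{\infty}(\beta_{\infty})=1$ and $\phi_{\infty}(c)=c=1$ yields $a=1$, establishing injectivity.

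With the image of $\iota_\infty(\*\F)$ shown to be exactly $I_\infty$, I would conclude by computing the preimage of $I_\infty$, namely ${\mc X}_N\*k\cdot\iota_\infty(\*\F)$. Since $\iota_\infty(\*\F)$ is trivial at every finite place, multiplying ${\mc X}_N$ by it alters only the component at $\p$, replacing $\ker\phi_{\infty}=\langle\pi\rangle\times U_{\infty}^{(1)}$ by $\langle\pi\rangle\times\*\F\times U_{\infty}^{(1)}=\*{k_{\infty}}$. Thus ${\mc X}_N\,\iota_\infty(\*\F)={\mc X}_N^+$, and the preimage of $I_\infty$ is ${\mc X}_N^+\*k$. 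By the Galois correspondence this is precisely the id\`ele subgroup attached to $\cicl N{}^+$, as claimed.
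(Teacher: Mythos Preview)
Your proof is correct. It differs from the paper's in direction and emphasis, though the underlying content is close.

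The paper argues from the group to the field: it lets $L_1$ be the class field of ${\mc X}_N^+\*k$, observes that ${\mc X}_N^+/{\mc X}_N\cong\*{k_\infty}/(\langle\pi\rangle\times U_\infty^{(1)})\cong\*\F$ so that $[\cicl N{}:L_1]\mid q-1$, notes that $\p$ splits completely in $L_1$ (because the full local group $\*{k_\infty}$ sits in the norm group) while $e_\infty(\cicl N{}|k)=q-1$, and concludes $[\cicl N{}:L_1]=q-1$ and $L_1=\cicl N{}^+$. Your argument runs in the opposite direction: starting from $\cicl N{}^+=\cicl N{}^{I_\infty}$, you invoke local--global compatibility to identify $D_\infty=I_\infty$ with the image of $\iota_\infty(\*{k_\infty})$, then prove that $\iota_\infty(\*\F)$ injects into $J_k/{\mc X}_N\*k$ so that this image has order exactly $q-1$, and read off the preimage as ${\mc X}_N^+\*k$. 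Your injectivity computation is essentially the paper's final remark ${\mc X}_N^+\cap\*k=\*\F$ unpacked in detail. The paper's route is quicker and needs only the crude splitting criterion; yours is more explicit about how class field theory pins down $I_\infty$ inside the id\`ele quotient, which makes the mechanism more transparent at the cost of a longer argument.
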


\begin{proof}
If $L_1$ is the field associated to ${\mc X}_N^+\*k$, 
then following the previous argument we obtain that
the filed of constants of $L_1$ is $\F$ and we have
\[
\frac{{\mc X}_N^+}{{\mc X}_N}\cong\frac{\*{k_{\infty}}}{\langle\pi\rangle\times
U_{\infty}^{(1)}}\cong \*\F,
\]
which implies that $[L:L_1]\leq q-1$. Furthermore $\p$ decomposes
totally in $L_1$ and it has ramification index $q-1$ in $L$.
It follows that $[L:L_1]= q-1$ and $L_1=\cicl N{}^+$. In fact,
${\mc X}_N^+\cap \*k =\*\F$.
\end{proof}

We return to our study of $\ge E$ for $E\subseteq\cicl N{}$. First we give
a result needed for the main result.

\begin{proposition}\label{P5.0}
Let $E$ be a cyclotomic function field, $E\subseteq \cicl N{}$. If $\pK$ is an
infinite prime in $E$ then $1/T$ is a norm from $E_{\pK}$, that
is, there exists $x\in E_{\pK}$ such that $\N_{E_{\pK}/k_{\infty}} x
=1/T$.
\end{proposition}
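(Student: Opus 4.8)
The plan is to reduce at once to the full cyclotomic field $\cicl N{}$ and then read the conclusion off the idèlic description of Proposition~\ref{P4.2}. Since $E\subseteq\cicl N{}$, the given infinite prime $\pK$ of $E$ lies below an infinite prime $\pL$ of $\cicl N{}$, producing a tower of local fields $k_{\infty}\subseteq E_{\pK}\subseteq\cicl N{}_{\pL}$. Transitivity of the norm, $\N_{\cicl N{}_{\pL}/k_{\infty}}=\N_{E_{\pK}/k_{\infty}}\circ\N_{\cicl N{}_{\pL}/E_{\pK}}$, shows that if $1/T$ is a norm from $\cicl N{}_{\pL}$ then it is automatically a norm from the intermediate field $E_{\pK}$. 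Hence it suffices to treat $E=\cicl N{}$; the degenerate case $N\in\*\F$ is trivial, since then $\cicl N{}=k$, $E_{\pK}=k_{\infty}$, and $1/T$ is its own norm.

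For the full cyclotomic field I would invoke the local--global compatibility of norm groups. By Proposition~\ref{P4.2} the idèle subgroup of $J_k$ attached to $\cicl N{}$ is ${\mc X}_N\*k$, with ${\mc X}_N$ as in \eqref{ideleskN}. The relevant principle, already used in the proof of Proposition~\ref{P3.14*} (see \cite[III, Theorem 8.3]{Neu69} or \cite[Corolario 17.6.47]{RzeVil2017}), is that the local reciprocity map at $\p$ is the restriction of the global one and has the local norm group as its kernel; concretely, an element $a\in\*{k_{\infty}}$ lies in $\N_{\cicl N{}_{\pL}/k_{\infty}}\big(\cicl N{}_{\pL}^{*}\big)$ if and only if the idèle $\vec\alpha\in J_k$ with $\alpha_{\p}=a$ and $\alpha_P=1$ for every finite prime $P$ belongs to ${\mc X}_N\*k$.

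With this criterion the verification becomes a one-line check. Take $a=1/T=\pi$ and let $\vec\alpha$ be the idèle with $\alpha_{\p}=\pi$ and $\alpha_P=1$ for all finite $P$. Every finite component equals $1$, hence lies in $U_{P_i}^{(\alpha_i)}$ for the primes $P_i\mid N$ and in $U_P$ for $P\in R_T'$, while the infinite component $\pi$ lies in $\langle\pi\rangle\subseteq\langle\pi\rangle\times U_{\infty}^{(1)}$. Comparing with \eqref{ideleskN} gives $\vec\alpha\in{\mc X}_N\subseteq{\mc X}_N\*k$, so $1/T$ is a norm from $\cicl N{}_{\pL}$ and therefore from $E_{\pK}$.

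The only genuinely nontrivial point is the local--global step of the second paragraph; the rest is bookkeeping of the single idèle $\vec\alpha$. I would phrase that step exactly as in Proposition~\ref{P3.14*}, so that the membership $\vec\alpha\in{\mc X}_N\*k$ rests on the class field theory already cited rather than being reproved. As a consistency check, the same computation identifies the full local norm group as $\langle\pi\rangle\times U_{\infty}^{(1)}=\ker\phi_{\infty}$, of index $q-1$ in $\*{k_{\infty}}$, matching the fact that $\p$ is tamely and totally ramified of degree $q-1$ in $\cicl N{}_{\pL}/k_{\infty}$; note it is precisely the sign normalization attached to the uniformizer $\pi=1/T$ that makes $1/T$ itself, and not merely some uniformizer, a norm.
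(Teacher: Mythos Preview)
Your argument is correct, but it follows a genuinely different route from the paper's. The paper proceeds by quoting the explicit local description of the cyclotomic completion at $\p$: since $E\subseteq\cicl N{}$ one has $E_{\pK}\subseteq k_{\infty}(\sqrt[q-1]{-1/T})$, hence $E_{\pK}=k_{\infty}(\sqrt[e]{-1/T})$ for some $e\mid q-1$, and then $1/T$ is visibly a norm (e.g.\ of $\pm\sqrt[e]{-1/T}$). This is a two-line explicit computation once the structure of $\cicl N{}_{\pL}$ is granted. Your approach instead stays on the global side: you reduce to $E=\cicl N{}$ by transitivity of the norm, invoke the id\`ele class group ${\mc X}_N\*k$ from Proposition~\ref{P4.2}, and use local--global compatibility of the reciprocity map to read off that $\pi=1/T$ is a local norm because the id\`ele supported only at $\p$ with value $\pi$ already lies in ${\mc X}_N$. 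The trade-off is clear: the paper's proof is shorter and yields an explicit preimage, but imports the description of $\cicl N{}_{\pL}$ from \cite{BaKo96,RzeVil2017}; your proof uses heavier class field theory machinery but is more self-contained within the paper, since Proposition~\ref{P4.2} is already proved, and as you note it simultaneously identifies the full local norm group as $\ker\phi_{\infty}$. One small remark: the principle you extract from the proof of Proposition~\ref{P3.14*} is the compatibility of local and global Artin maps rather than the full-splitting criterion stated there, so it would be cleaner to cite that compatibility directly (it is standard and covered by the same references).
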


\begin{proof}
Since $E$ is cyclotomic, we have that $E\subseteq k_{\infty}
(\sqrt[q-1]{-1/T})$ (\cite[\S 3]{BaKo96} or
\cite[Proposici\'on 9.3.20]{RzeVil2017}). Hence
$E_{\pK}=k_{\infty}(\sqrt[e]{-1/T})$ with $e|q-1$. The result
follows. See also \cite[Th\'eor\`eme 1.1.2 (ii)]{AnJa2000}.
\end{proof}

Let $\Gamma\subseteq J_E$ be the id\`ele subgroup of $E$
that corresponds to $H_E^+$. This group satisfies 
$\N_{H_E^+/E}C_{H_E^+}=\*E \Gamma/\*E$. Then, by definition,
we may take $\Gamma=U_E^+$. Observe also that since $E/k$ is
abelian, $H_{E/k}^+$ corresponds to $\ge E$ over $k$ (see 
\cite[Proposition 2.1.3]{AnJa2000} or \cite[Proposici\'on 
17.6.48]{RzeVil2017}).
It follows that the id\`ele class subgroup of
$C_k$ that corresponds to $H_{E/k}^+=\ge E$ over $k$ is
precisely $\N_{E/k}(U_E^+)\*k/\*k\cong \N_{E/k} U_E^+$.

We have $U_E^+=\prod_{\pK|\infty}\ker 
\phi_{E_{\pK}}\times \prod_{\pK\nmid 
\infty} U_{\pK}$, where we denote $\infty=\p$.

If $\pK$ is unramified, that is, $\pK\nmid \infty$ and $\pK\nmid
P_i$, $1\leq i\leq r$, where $N=P_1^{\alpha_1}\cdots P_r^{\alpha_r}$, then
we have $\N_{E_{\pK}/k_P}(U_{\pK})=U_P$, where $\pK\cap k=P\in R_T^+$
(\cite[II, Corollary 4.4]{Neu69} or
\cite[Teorema 17.2.17]{RzeVil2017}). If $\pK|P_i$ for some
$1\leq i\leq r$, we have
$[U_{P_i}:\N_{E_{\pK}/k_{P_i}} U_{\pK}]=e_{E_{\pK}/k_{P_i}}(\pK|P_i)
=\Phi(P_i^{\alpha_i})=q^{(\alpha_i-1)d_i}(q-1)$ where $d_i=
\deg P_i$.

Now, because $E\subseteq \cicl N{}$, we obtain that $\g E\subseteq
\cicl N{}$ and the subgroup of $J_E$ that corresponds to
$E_H$ is $U_E\*E$ and therefore the id\`ele class subgroup 
of the id\`ele class group $C_k$
corresponding to $\g E$ is $\N_{E/k}(U_E)\*k/\*k$. We have
\begin{gather*}
\N_{E/k} U_E=
\prod_{\pK|\p}\N_{E_{\pK}/k_{\infty}} \*{E_{\pK}}\times \prod_{P\in R_T}
\prod_{\pK|P}\N_{E_{\pK}/k_P} U_{\pK}.
\intertext{We also have}
N_{E/k} U_E^+=
\prod_{\pK|\p}\N_{E_{\pK}/k_{\infty}}(\ker \phi_{E_{\pK}})\times \prod_{P\in R_T}
\prod_{\pK|P}\N_{E_{\pK}/k_P} U_{\pK}.
\end{gather*}

If we write $R_T':=R_T\setminus
\{P_1,\ldots,P_r\}$, then
\begin{gather}\label{Ec4.2}
\Big(\prod_{P\in R_T'}U_P\times \prod_{i=1}^r U_{P_i}^{(\alpha_i)}\Big)\*k\subseteq
\Big(\prod_{P\in R_T}\prod_{\pK|P}\N_{E_{\pK}/k_P} U_{\pK}\Big)\*k.
\end{gather}

Let $\pK$ be an infinite prime in $E$.
Note that since $E$ is cyclotomic,
we have $E_{\pK}\subseteq k_{\infty}(\sqrt[q-1]{
-1/T})$ and it follows that $1/T$ is 
a norm from $E_{\pK}$ (Proposition \ref{P5.0}. See also
\cite[Th\'eor\`eme 1.1.2]{AnJa2000}).
Because $E_{\pK}/k_{\infty}$ is totally ramified, we have
$\N_{E_{\pK}/k_{\infty}} \*{E_{\pK}}\supseteq \langle\pi_{\infty}\rangle\times
U_{\infty}^{(n_0)}$ for some $n_0\in{\ma N}\cup \{0\}$ 
(\cite[II, Theorem 7.17]{Neu69} or \cite[Teorema 17.5.47]{RzeVil2017}). 

Let $\*{E_{\pK}}=\langle\pi_{E_{\pK}}
\rangle\times \*\F\times U_{\pK}^{(1)}$ and
$\phi_{\pK}:=\phi_{E_{\pK}}=\phi_{\infty}\circ \N_{E_{\pK}/k_{\infty}}$.
We have $\ker\phi_{\pK}=\N_{E_{\pK}/k_{\infty}}^{-1}(\ker\phi_{\infty})=
\N_{E_{\pK}/k_{\infty}}^{-1}\big(\langle\pi_{\infty}\rangle\times U_{\infty}^{(1)}\big)$.

Since $k\subseteq E\subseteq \cicl N{}$, it follows that $k_{\infty}\subseteq E_{\pK}
\subseteq \cicl N{}_{\pL}=k_{\infty}(\sqrt[q-1]{-1/T})$ where $\pL$ is a prime
in $\cicl N{}$ such that $\pL|\infty$ and $\pK=\pL\cap E$. The field associated to
$\ker\phi_{\infty}$ is $k_{\infty}(\sqrt[q-1]{-1/T})$ and the field associated to
$\ker\phi_{\pK}=\N^{-1}_{E_{\pK}/k_{\infty}}(\ker\phi_{\infty})$ is $E_{\pK}
(\sqrt[q-1]{-1/T})=k_{\infty}(\sqrt[q-1]{-1/T})$ (Proposition \ref{P3.0} or 
\cite[Corollaire 1.1.3]{AnJa2000}). It follows that $\N_{E_{\pK}/k_{\infty}}
(\ker \phi_{\pK})=\ker \phi_{\infty}$. Therefore
\begin{align*}
\N_{E/k}U_E^+&=\prod_{P\in R_T}\prod_{\pK|P}\N_{E_{\pK}/k_{\infty}}U_{\pK}\times
\prod_{\pK|\infty}\N_{E_{\pK}/k_{\infty}}(\ker \phi_{\pK})\\
&=\prod_{P\in R_T}\prod_{\pK|P}\N_{E_{\pK}/k_{\infty}}U_{\pK}\times
(\ker \phi_{\infty}).
\end{align*}

From (\ref{Ec4.2}) we obtain that
\begin{multline*}
\Big(\prod_{P\in R_T'}U_P\times \prod_{i=1}^r U_{P_i}^{(\alpha_i)}\times
(\langle\pi_{\infty}\rangle\times U_{\infty}^{(1)})\Big)\*k\\
\subseteq
\Big(\prod_{P\in R_T}\prod_{\pK|P}\N_{E_{\pK}/k_P} U_{\pK}\times
\prod_{\pK|\infty}\N_{E_{\pK}/k_{\infty}}(\ker \phi_{\pK})\Big)\*k.
\end{multline*}

Hence, ${\mc X}_N\*k\subseteq (\N_{E/k}U_E^+)\*k$.
Therefore $\ge E\subseteq \cicl N{}$.

Now, $\ge E/E$ is unramified at the finite primes and if $L$
is the field associated to $Y=\prod_{P\in R_T}X_P$, $L$
is the maximal abelian extension of $E$ contained in
$\cicl N{}$ unramified at the finite primes. Therefore
$\ge E\subseteq L$.

To show $L\subseteq \ge E$, it suffices to prove that $L\subseteq H_E^+$
since $L/k$ is abelian and $\ge E\subseteq H_E^+$ is the maximal
abelian extension of $k$ contained in $H_E^+$.

Now, to show that $L\subseteq H_E^+$, we must prove that
$\N_{L/E}C_L\supseteq \N_{H_E^+/E} C_{H_E^+}=U_E^+\*E/\*E$, where
$U_E^+=\prod_{\pK|\infty}\ker\phi_{\pK}\times \prod_{\pK\nmid\infty}U_{\pK}$.

Let $\N_{L/E}C_L=\Lambda \*E/\*E$ where $\Lambda =\N_{L/E}J_L$. 
It suffices to prove that $U_E^+\subseteq \Lambda$. 
Since $L/E$ is unramified at the finite primes, if
$\pK$ is a finite prime of $E$ and $\pL$ is a prime of $L$ over 
$\pK$, then $\N_{L_{\pL}/E_{\pK}} U_{\pL}=U_{\pK}$
where we denote $U_{\pL}=U_{L_{\pL}}$ and $U_{\pK}=U_{E_{\pK}}$
(\cite[II, Corollary 4.4]{Neu69} or 
\cite[Teorema 17.2.17]{RzeVil2017}). In particular
$\N_{L_{\pL}/E_{\pK}}\*{L_{\pL}}\supseteq U_{\pK}$ and
$\N_{L/E}J_L\supseteq \prod_{\pK\nmid \infty} U_{\pK}$.

On the other hand, $L/\g E$ is totally ramified at the infinite primes
and the infinite primes of $E$ decompose fully in $\g E$,
so that $(\g E)_{\pK}=E_{\pK}$ for $\pK|\infty$ 
and the uniformizer  of 
$\pK$ in $E$ is also a uniformizer for $\pK$ in $\g E$.

From local class fiel theory, (\cite[II, Theorem 7.17]{Neu69}
or \cite[Teorema 17.5.47]{RzeVil2017}),
$\pi_{E,\infty}:=\pi_{E_{\pK}}$ is a norm from
$L_{\pL}$ with $\pL|\pK$. If $(\underline{\ \ },L_{\pL}/E_{\pK})$
represents the Artin local map, then $(U_{E_{\pK}}^{(1)},L_{\pL}/
E_{\pK})=G^1(L_{\pL}/E_{\pK})$, the first ramification group of
$L_{\pL}/E_{\pK}$. Since the infinite primes are tamely ramified
in $L_{\pL}/E_{\pK}$, it follows that $G^1(L_{\pL}/E_{\pK})=
\{1\}$ and that
 $U_{E_{\pK}}^{(1)}\subseteq \N_{L_{\pL}/E_{\pK}}\*L_{\pL}$.

The field of constants of $E$ and of $k$ is $\F$ and every
infinite prime has degree $1$ in $E$, so that if $\pK$ is
an infinite prime of $E$, $\*{E_{\pK}}=
\langle\pi_{E,\infty}\rangle\times \*\F\times 
U_{E_{\pK}}^{(1)}$.
Note that $[E_{\pK}:k_{\infty}]=ef=e=
[\*{k_{\infty}}:\N_{E_{\pK}/k_{\infty}}\*{E_{\pK}}]$
where $e$ denotes the ramification index of $E_{\pK}/k_{\infty}$. 
Furthermore,
since $E_{\pK}/k_{\infty}$ is total and tamely ramified,
$\langle\pi_{\infty}\rangle\times U_{\infty}^{(1)}\subseteq 
\N_{E_{\pK}/k_{\infty}}(\*{E_{\pK}})$
and $\N_{E_{\pK}/k_{\infty}}\*\F=(\*\F)^e$. It follows that
$\langle\pi_{\infty}\rangle\times (\*\F)^e\times U_{\infty}^{(1)}\subseteq \N_{E_{\pK}/
k_{\infty}} \*{E_{\pK}}$. From $[\*{k_{\infty}}:\N_{E_{\pK}/k_{\infty}}\*{E_{\pK}}]=e$
we obtain the equality
\[
\N_{E_{\pK}/k_{\infty}} \*{E_{\pK}}=
\langle\pi_{\infty}\rangle\times (\*\F)^e\times U_{\infty}^{(1)}.
\]

Therefore
$\phi_{\pK}(\*{E_{\pK}})=\phi_{\infty}(\N_{E_{\pK}/k_{\infty}} \*{E_{\pK}})=
\phi_{\infty}\big(\langle\pi_{\infty}\rangle\times (\*\F)^e\times U_{\infty}^{(1)})=
(\*\F)^e$. It follows that $\ker \phi_{\pK}=\langle\pi_{E,\infty}\rangle\times R\times
U_{E_{\pK}}^{(1)}$ where $R=\{\lambda\in\*\F\mid \lambda^e=1\}=
(\*\F)^{(q-1)/e}$. Here $e$ denotes the ramification index of $\p$ in
$E/k$.

We note that if $\pL$ is an infinite prime of $L$, $\N_{L_{\pL}/E_{\pK}}
\*\F=(\*\F)^{e'}$ where $e':=e_{\infty}(L/\g E)=e_{\infty}(L/E)$.
Then $e'e=e_{\infty}(L/k)$ and in particular 
$e'e|q-1$ and $e'\big|\frac{q-1}{e}$. 
Let $\*\F=\langle\beta\rangle$ with $o(\beta)=q-1$.
Let $\lambda\in R$, $\lambda^e=1$. We have $\lambda=\beta^s$
for some $s$. Therefore $\lambda^e=\beta^{es}=1$ and $q-1|es$.
Since $e'e|q-1$ it follows that $e'e|se$ and $e'|s$. In this way
we obtain that $\lambda=\beta^s=(\beta^{s/e'})^{e'}\in
(\*\F)^{e'}\subseteq \N_{L_{\pL}/E_{\pK}} \*{L_{\pL}}$.

Hence $\ker\phi_{\pK}\subseteq \N_{L_{\pL}/E_{\pK}}\*{L_{\pL}}$
and $U_E^+=\prod_{\pK|\infty}\ker\phi_{\pK}\times \prod_{
\pK\nmid \infty}U_{\pK}\subseteq \N_{L/k}J_L$ from where we
obtain that $L\subseteq H_E^+$ and therefore $L=\ge E$.

We have proved

\begin{theorem}\label{T4.4} Let $E\subseteq \cicl N{}$. Then
the extended genus field $\ge E$ relative to $k$ is the field
associated to the group of Dirichlet characters $Y=\prod_{P\in
R_T}X_P$, where $X$ is the group of Dirichlet characters
associated to the field $E$. $\fin$
\end{theorem}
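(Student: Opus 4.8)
The plan is to prove $\ge E = L$, where $L$ is the field associated to $Y=\prod_{P\in R_T}X_P$, by a double inclusion, working throughout on the id\`ele side via class field theory. The key translation is that, since $E/k$ is abelian, the id\`ele class subgroup of $C_k$ corresponding to $\ge E$ over $k$ is $\N_{E/k}(U_E^+)\*k/\*k$ (by Proposition \ref{P3.0} and the identification of $U_E^+\*E$ with $H_E^+$), and by Proposition \ref{P4.2} the subgroup corresponding to a cyclotomic field $\cicl N{}$ is ${\mc X}_N\*k$. So membership questions about $\ge E \subseteq \cicl N{}$ reduce to comparing ${\mc X}_N\*k$ with $(\N_{E/k}U_E^+)\*k$.

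First I would establish $\ge E\subseteq \cicl N{}$ and then $\ge E\subseteq L$. Writing $U_E^+=\prod_{\pK\mid\infty}\ker\phi_{\pK}\times\prod_{\pK\nmid\infty}U_{\pK}$, I would compute $\N_{E/k}U_E^+$ place by place. At the finite primes, local class field theory gives $\N_{E_{\pK}/k_P}U_{\pK}=U_P$ for unramified $\pK$, so the unramified and ramified finite contributions together dominate $\prod_{P\in R_T'}U_P\times\prod_{i=1}^r U_{P_i}^{(\alpha_i)}$. The crucial infinite-place computation uses Proposition \ref{P5.0}: since $E$ is cyclotomic, $E_{\pK}\subseteq k_{\infty}(\sqrt[q-1]{-1/T})$, and Proposition \ref{P3.0} applied to $\N^{-1}_{E_{\pK}/k_{\infty}}(\ker\phi_{\infty})$ identifies the field of $\ker\phi_{\pK}$ as $k_{\infty}(\sqrt[q-1]{-1/T})$ itself, forcing $\N_{E_{\pK}/k_{\infty}}(\ker\phi_{\pK})=\ker\phi_{\infty}=\langle\pi_{\infty}\rangle\times U_{\infty}^{(1)}$. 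Assembling these yields ${\mc X}_N\*k\subseteq(\N_{E/k}U_E^+)\*k$, hence $\ge E\subseteq\cicl N{}$; since $\ge E/E$ is unramified at the finite primes and $L$ is by definition the maximal abelian extension of $E$ inside $\cicl N{}$ unramified at the finite primes, we get $\ge E\subseteq L$.

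For the reverse inclusion $L\subseteq\ge E$, since $L/k$ is abelian and $\ge E$ is the maximal abelian extension of $k$ in $H_E^+$, it suffices to show $L\subseteq H_E^+$, i.e.\ $\N_{L/E}J_L\supseteq U_E^+$. At the finite primes $\prod_{\pK\nmid\infty}U_{\pK}\subseteq\N_{L/E}J_L$ is immediate from unramifiedness. The heart is the infinite places: I would first pin down $\N_{E_{\pK}/k_{\infty}}\*{E_{\pK}}=\langle\pi_{\infty}\rangle\times(\*\F)^e\times U_{\infty}^{(1)}$ (using total tame ramification and the index $[\*{k_{\infty}}:\N\*{E_{\pK}}]=e$), which gives $\phi_{\pK}(\*{E_{\pK}})=(\*\F)^e$ and hence $\ker\phi_{\pK}=\langle\pi_{E,\infty}\rangle\times R\times U_{E_{\pK}}^{(1)}$ with $R=(\*\F)^{(q-1)/e}$. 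The uniformizer and the wild part $U_{E_{\pK}}^{(1)}$ lie in $\N_{L_{\pL}/E_{\pK}}\*{L_{\pL}}$ by tameness of $L_{\pL}/E_{\pK}$ (the first ramification group is trivial), so the only real content is showing $R\subseteq(\*\F)^{e'}$ where $e'=e_{\infty}(L/E)$.

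The main obstacle is precisely this containment $R\subseteq(\*\F)^{e'}$, which is the only step that is not a formal consequence of unramifiedness or tameness and where the specific arithmetic of the ramification indices enters. The argument I anticipate: write $\*\F=\langle\beta\rangle$ of order $q-1$; any $\lambda\in R$ satisfies $\lambda^e=1$, so $\lambda=\beta^s$ with $(q-1)\mid es$; from $e'e\mid q-1$ one deduces $e'e\mid se$, hence $e'\mid s$ and $\lambda=(\beta^{s/e'})^{e'}\in(\*\F)^{e'}$. This uses the compatibility $e'e=e_{\infty}(L/k)\mid q-1$, itself a consequence of $L/k$ having tamely ramified infinite prime of degree one. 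Once $R\subseteq(\*\F)^{e'}$ is secured, $\ker\phi_{\pK}\subseteq\N_{L_{\pL}/E_{\pK}}\*{L_{\pL}}$ follows and therefore $U_E^+\subseteq\N_{L/E}J_L$, giving $L\subseteq H_E^+$ and so $L=\ge E$, which is the assertion of the theorem.
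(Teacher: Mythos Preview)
Your proposal is correct and follows essentially the same route as the paper's proof: the same double inclusion via id\`eles, the same local computations at finite and infinite places, the same use of Proposition~\ref{P5.0} to get $\N_{E_{\pK}/k_{\infty}}(\ker\phi_{\pK})=\ker\phi_{\infty}$, the same explicit description $\ker\phi_{\pK}=\langle\pi_{E,\infty}\rangle\times R\times U_{E_{\pK}}^{(1)}$, and the identical divisibility argument $e'e\mid q-1\Rightarrow R\subseteq(\*\F)^{e'}$.

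One small point of precision: you say the uniformizer $\pi_{E,\infty}$ lies in $\N_{L_{\pL}/E_{\pK}}\*{L_{\pL}}$ ``by tameness''. Tameness gives you $U_{E_{\pK}}^{(1)}$ in the norm group (via triviality of the first ramification group), but for the uniformizer you need that $L_{\pL}/E_{\pK}$ is \emph{totally} ramified, so that $\N_{L_{\pL}/E_{\pK}}\pi_{L_{\pL}}$ is a uniformizer of $E_{\pK}$. The paper secures this by noting that the infinite primes of $E$ split completely in $\g E$ (so $(\g E)_{\pK}=E_{\pK}$) while $L/\g E$ is totally ramified at the infinite primes; hence $L_{\pL}/E_{\pK}$ is totally ramified and one may take $\pi_{E,\infty}$ to be such a norm. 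You should make this step explicit.
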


\section{Finite abelian extensions}\label{S5}

Consider $K/k$ a finite abelian extension. 
Let $n\in{\ma N}\cup\{0\}$, $m\in{\ma N}$ and
$N\in R_T$ such that $K\subseteq {_n\cicl N{}_m}$. Let 
$E:=KM\cap \cicl N{}$. We have that $EK/K$ is an extension
of constants, in particular an unramified extension (see 
\cite{BaMoReRzVi2018}). Let $H$ be the decomposition group of
the infinite primes of $K$ in
$EK/K$. We have $H$ is canonically isomorphic with the
decomposition group of the infinite primes of $\g E K/K$. We have
that $|H|$ equals the inertia degree of the infinite primes of $K$ in
$EK/K$ and in $\g EK/K$. We also have that $\g K=\g E^H K$.

\begin{theorem}\label{T5.1}
With the above notations, we have that $\ge K = DK$ with $\ge{(\g{E^H})}
\subseteq D\subseteq \ge E$.
In particular, when $H=\{1\}$, we have $\ge K=\ge E K$.
\end{theorem}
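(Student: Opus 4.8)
The plan is to pass to id\`ele class groups over $k$ and then reduce the statement to two field inclusions that can be settled prime by prime. I first note that, since $K/k$ is abelian, $K$ is itself an abelian extension of $k$ contained in $H_K^+$; hence $K\subseteq F$, where $F$ is the maximal abelian extension of $k$ inside $H_K^+$, and therefore $\ge K=KF=F$ is abelian over $k$ and contains $K$. By Furuta's Proposition \ref{P3.-1}, applied to the extension $K/k$ with ${\mc H}=U_K^+\*K$ and $H_K^+\leftrightarrow U_K^+\*K$, the field $\ge K=F$ corresponds over $k$ to the norm subgroup $\N_{K/k}(U_K^+)\*k$ of $J_k$; by the computation preceding Theorem \ref{T4.4}, $\ge E$ corresponds to $\N_{E/k}(U_E^+)\*k$. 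Since $E^H\subseteq E$ is again cyclotomic and $\ge{(\g{E^H})}=\ge{(E^H)}$ (which follows from the monotonicity and idempotency of $\ge{(\cdot)}$ recorded in Remark \ref{R3.11-1*} together with $H_{E^H}\subseteq H_{E^H}^+$), Theorem \ref{T4.4} also identifies $\ge{(\g{E^H})}$ with the norm group $\N_{E^H/k}(U_{E^H}^+)\*k$.

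The reduction is then as follows: it suffices to prove the two inclusions $\ge K\subseteq K\ge E$ and $\ge{(\g{E^H})}\subseteq \ge K$. Granting these, set $D:=\ge K\cap \ge E$. Then $\ge{(\g{E^H})}\subseteq D\subseteq\ge E$, the left inclusion using $\ge{(\g{E^H})}\subseteq\ge K$ together with $\ge{(\g{E^H})}\subseteq\ge E$ (monotonicity from $E^H\subseteq E$). Since $K$, $\ge K$ and $\ge E$ are all abelian over $k$ and $\ge K\subseteq K\ge E$, inside the abelian extension $K\ge E/k$ the lattice of Galois groups is modular; the modular law, together with $\Gal(K\ge E/K)\cap\Gal(K\ge E/\ge E)=\{1\}$, yields $KD=K(\ge K\cap \ge E)=\ge K$. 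This gives $\ge K=DK$ with the required squeeze.

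Both inclusions are checked by comparing the corresponding norm subgroups of $C_k$ place by place, the inclusion of fields being equivalent to the reverse inclusion of norm groups. At the finite primes the norm groups are comparable in the right direction: because $EK/K$ is an everywhere unramified constant extension, the local norm is surjective on units at each finite prime, so the finite components of $\N_{K/k}(U_K^+)$ and of $\N_{E/k}(U_E^+)$ coincide; and transitivity of the local norm in the tower $E^H\subseteq E$ gives the finite-prime containment against $\N_{E^H/k}(U_{E^H}^+)$. The whole problem thus concentrates at the infinite prime. For $\ge K\subseteq K\ge E$ I would use the tautological identity $\N_{K_{\pL}/k_{\infty}}(\ker\phi_{K_{\pL}})=\ker\phi_{\infty}\cap \N_{K_{\pL}/k_{\infty}}(\*{K_{\pL}})$, coming from $\phi_{K_{\pL}}=\phi_{\infty}\circ\N_{K_{\pL}/k_{\infty}}$, to compare the $\p$-components of $N(K\ge E)=N(K)\cap N(\ge E)$ and of $N(\ge K)$. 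For $\ge{(\g{E^H})}\subseteq \ge K$ I would use that the infinite primes of $K$ split in the genus field $\g K=\g{E^H}K$, so that the infinite completions of $\g{E^H}$ embed into those of $K$; this is precisely the content of the decomposition group $H$, and it allows the sign condition at $\p$ carried by $\ge{(\g{E^H})}\subseteq H_{\g{E^H}}^+$ to be transported into $H_K^+$, with Proposition \ref{P5.0} and Corollary \ref{C4.3} controlling the norms of $\ker\phi$ at $\p$.

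The main obstacle is exactly this infinite-prime (sign) analysis. In the non-extended case the decomposition group $H$ determines the genus field on the nose, because splitting at $\p$ is an all-or-nothing condition; here the admissible sign group of $K$ at the infinite primes, i.e.\ the $\p$-part of $\N_{K/k}(U_K^+)$, is only squeezed between the value $\ker\phi_{\infty}$ attached to the full cyclotomic field $\ge E$ and the value attached to $\ge{(E^H)}$, and is not pinned down by $H$ alone. This is what forces the conclusion to be a pair of bounds rather than an equality. Finally, when $H=\{1\}$ the infinite primes of $K$ split completely in $EK/K$, so $E^H=E$ and the infinite completions of $K$ and of $E$ coincide; the two bounds then become equal after composition with $K$, namely $K\,\ge{(\g E)}=K\ge E$, and we conclude $\ge K=\ge E\,K$.
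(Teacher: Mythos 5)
Your skeleton is largely sound: granting your two inclusions, the modular-law argument does give $K(\ge K\cap \ge E)=\ge K$, your $D:=\ge K\cap\ge E$ coincides with the paper's $D=\ge K\cap \cicl N{}$, and the identification $\ge{(\g{E^H})}=\ge{(E^H)}$ is a correct use of Remark \ref{R3.11-1*}. Note, however, that your second inclusion $\ge{(\g{E^H})}\subseteq \ge K$ needs none of the sign-transport machinery you sketch: since $\g{E^H}\subseteq \g K\subseteq \ge K$, Remark \ref{R3.11-1*} gives $\ge{(\g{E^H})}\subseteq \ge{(\ge K)}=\ge K$ in one line, which is exactly what the paper does.

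The genuine gap is your first inclusion, $\ge K\subseteq K\ge E$, which carries all the arithmetic content of the theorem, and the proposed place-by-place comparison of norm groups cannot establish it. The purely local inclusions you invoke are indeed true: at a finite prime $P$ one has $\N_{K_{\pK}/k_P}(\*{K_{\pK}})\cap \N_{E_{\pL}/k_P}(U_{E_{\pL}})\subseteq \N_{K_{\pK}/k_P}(U_{K_{\pK}})$ because $\N_{L/F}(U_L)=\N_{L/F}(\*L)\cap U_F$ for local fields, and at $\p$ your tautological identity gives $\N_{K_{\pL}/k_{\infty}}(\*{K_{\pL}})\cap\ker\phi_{\infty}=\N_{K_{\pL}/k_{\infty}}(\ker\phi_{K_{\pL}})$. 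But the groups you must compare are subgroups of $J_k$ containing $\*k$, and inclusions between such groups are not detected componentwise, since $(A\*k)\cap(B\*k)$ is in general strictly larger than $(A\cap B)\*k$. Concretely, an element of $\N_{K/k}(J_K)\*k\cap \N_{E/k}(U_E^+)\*k$ can be written $\vec x=\vec v\,c$ with $\vec x\in \N_{K/k}(J_K)$, $\vec v\in \N_{E/k}(U_E^+)$, $c\in\*k$; to place it in $\N_{K/k}(U_K^+)\*k$ you must produce $c'\in\*k$ with $\vec x\,c'^{-1}\in \N_{K/k}(U_K^+)$, and the valuation and sign constraints force $c'=c$ (an element of $\*k$ that is a unit at every finite prime and has sign $1$ equals $1$). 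You are then left needing that the global element $c$ is a local norm from $K$ at every place --- a global statement that no componentwise computation supplies; note that the same naive localization would ``prove'' $\ge K\subseteq K\,H_k^+=K$, which is absurd. This is precisely where the paper inserts structural input that your sketch lacks: it bounds $D=\ge K\cap \cicl N{}$ inside $\ge E$ field-theoretically, via the auxiliary field $M$, the facts $\g K=\g{E^H}K$ and $\g EM=\g KM$ from \cite{BaMoReRzVi2018}, and the maximality/Dirichlet-character description of $\ge E$ from Theorem \ref{T4.4} (in effect a conductor bound for abelian-over-$k$ extensions of $K$ unramified at the finite primes). Without an ingredient playing that role your argument does not close; and your remark that the difficulty ``concentrates at the infinite prime'' misplaces it --- the sign analysis at $\p$ is the easy, tautological part, while the true obstruction is global.
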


\begin{proof}
We know that $\g EM=\g KM$ (see \cite[page 2111]{BaMoReRzVi2018}).
Set $C:=\ge K M\cap \cicl N{}\supseteq \g K M\cap \cicl N{}=\g E M\cap
\cicl N{}\supseteq \g E\cap \cicl N{}=\g E$.

\[
\xymatrix{
\cicl N{}\ar@{-}[d]\\ C\ar@{-}[rr]\ar@{-}[d]&&CM=\ge K M\ar@{-}[d]\\
\g E\ar@{-}[rr]&&\g EM=\g K M
}
\]

Note that $\ge K M/\g K M$ is unramified at the finite primes because
it is so in $\ge K/\g K$. We also have that $\g E M/\g E$ is unramified
at the finite primes. In particular $C/\g E$ is unramified at the finite primes
and $C\subseteq \cicl N{}$. Since $\ge E/k$ is the maximal ciclotomic
extension with $\ge E/E$ unramified at the finite primes, it follows that
$C\subseteq \ge E$.

Therefore $\g E\subseteq C\subseteq \ge E$ and 
$\begin{array}{c}\g E M\\ \|\\ \g K M\end{array} \subseteq \begin{array}{c}
CM\\ \| \\ \ge K M\end{array}\subseteq \begin{array}{c}\ge EM\\\|\\ 
\ge E M\end{array}$. In particular $\ge K M\subseteq \ge E M$.

\[
\xymatrix{
&\cicl N{}\ar@{-}[d]\\
&\ge E\ar@{-}[rr]\ar@{-}[d]\ar@{-}[ldd]&&\ge EM\ar@{-}[d]\\
&C\ar@{-}[rr]\ar@{-}[dd]&&\ge K M\ar@{-}[dl]\\
D\ar@{-}[rr]\ar@{-}[dd]&&\ge K=DK\ar@{-}[dd]\\
&\g E\ar@{-}[dl]\ar@{-}[rr]&&\g E M=\g K M\ar@{-}[uu]\ar@{-}[dl]\\
\g {E^H}\ar@{-}[rr]&& \g K=\g{E^H}K
}
\]

Let $D:=\ge K\cap \cicl N{}=\g{E^H} K\cap \cicl N{}\supseteq \g{E^H}
\cap \cicl N{} =\g{E^H}$. Thus $\g{E^H}\subseteq D$ and $\ge K=DK$.

We have that $EM/E$ is unramified at the finite primes. Therefore $DEM=
DKM/EM=KM$ is unramified at the finite primes since $\ge K/K$ is so.
It follows that $DE/E$ is unramified at the finite primes so that $D
\subseteq \ge E$. Hence $\ge K=DK\subseteq \ge E K$.

In the particular case that $H=\{1\}$ we have that $E\subseteq \g K
\subseteq \ge K$ so that $\ge E\subseteq \ge K$ and $\ge K=\ge E K$.

In the general case, $\g{E^H}\subseteq D\subseteq \ge E$ and $\ge E/
\g{E^H}$ is totally ramified at the infinite primes. Since $\g{E^H}\subseteq
\g K$, it follows that $\ge{(\g{E^H})}\subseteq \ge K$. Hence $\ge{(\g{
E^H})}\subseteq D$.
\end{proof}

\begin{example}\label{Ex5.3}{\rm{
Let $K:=k(\sqrt[l]{\gamma D})$ where $l$ is a prime number such that
$l|q-1$, $D\in R_T$, $D$ monic with $l\nmid\deg D$, 
and $\gamma\not\equiv (-1)^{\deg D}
\bmod (\*\F)^l$. Then $E=MK\cap \cicl D{}=k(\sqrt[l]{\*D})$ where $M=k_l$
and $D^*=(-1)^{\deg D} D$. Then
$H\cong C_l$, $\ge E=\g E= E$, $\g{E^H}=E^H=k$, $\g K=K$.

Set $n:=\deg D$, $D=T^n+a_{n-1}T^{n-1}+\cdots +a_1T+a_0$, $n=lm-r$ with
$0<r<l$.
The infinite prime $\p$ ramifies in $K/k$, hence there is only one infinite
prime $\pL_{\infty}$ in $K$. Therefore we have 
$U_K=\*{K_{\infty}}\times \prod_{\pK\nmid
\infty} U_{\pK}$ and $U_K^+=\ker \phi_{K_{\infty}}\times \prod_{\pK\nmid
\infty} U_{\pK}$, where $K_{\infty}=K_{\pL_{\infty}}$. Now
\[
K_{\infty}=k_{\infty}(\sqrt[l]{\gamma D})=k_{\infty}(\sqrt[l]{\gamma T^n u})=
k_{\infty}\big(\sqrt[l]{\gamma T^{-r} (T^m)^l u}\big),
\]
where $u=1+a_{n-1}(1/T)+\cdots+a_1(1/T)^{n-1}+a_0(1/T)^n\in U_{K_{
\infty}}^{(1)}$. Since $p\neq l$, there exists $v\in U_{K_{\infty}}^{(1)}$ 
such that $u=v^l$. Therefore $K_{\infty}=k_{\infty}(\sqrt[l]{\gamma T^{-r}})
=k_{\infty}(\sqrt[l]{\delta/T})$ for some $\delta\in\*\F$, $\delta\not\equiv
(-1)\bmod (\*\F)^l$.

A prime element in $K_{\infty}$ is $\Pi:=\pi_{K_{\infty}}=\sqrt[l]{\delta/T}$,
$\Pi^l=\delta/T=\delta\pi_{\infty}$. Hence, for an
arbitrary element $x\in \*{K_{\infty}}$,
$x=\Pi^m \xi w$, $m\in {\mathbb Z}$, $\xi\in\*\F$,
$w\in U_{K_{\infty}}^{(1)}$, we have
$N_{K_{\infty}/k_{\infty}}(x)=((-1)^{l-1}\delta\pi_{\infty})^m
\xi^l w'$, with $w'\in U_{\infty}^{(1)}$, so that $\phi_{K_{\infty}}(x)=
((-1)^{l-1}\delta)^m \xi^l$. It follows that
if $x\in\ker\phi_{K_{\infty}}$, then
$l|m$. We also have
$\delta^{-1}\Pi^l\in \ker\phi_{K_{\infty}}$. 

Therefore
$\min\{m\in{\mathbb N}\mid \text{there exists $\vec\alpha\in U_K^{+}$ with\ }
\deg\vec\alpha=m\}=l$.
It follows the field of constants of $H_K^+$ and of $\ge K$ is ${\mathbb
F}_{q^l}$.

Since the field of constants of $\g K$ is $\F$, we have $\ge K=K_{ge, l}=
\ge E K$.
}}
\end{example}

\begin{example}\label{Ex5.4}{\rm{
Let $l$ be a prime number such that $l^2|q-1$. Let $K=k(\sqrt[l^2]{\gamma D})$
where $\gamma \in \*\F$, $D=P_1^{\alpha_1}\cdots P_r^{\alpha_r}\in R_T$,
$P_1,\ldots, P_r\in R_T^+$, $1\leq \alpha_i\leq l^2-1$, $1\leq i\leq r$, $r\geq 2$.
We assume that $\deg D=ld$ with $l\nmid d$, that $l\nmid \deg P_1$, that
$\gcd(\alpha_i,l)=1$, $1\leq i\leq s$, $s\geq 1$ and that $l|
\alpha_j$, $s+1\leq j\leq r$. Then $e_{P_i}(K/k)=l^2$ for $1\leq i\leq s$,
$e_{P_j}(K/k)=l$ for $s+1\leq j\leq r$, and $e_{\infty}(K/k)=l$.
Since $e_{P_1}(K/k)=l^2$, the field of
constants of $K$ is $\F$. We also assume $(-1)^{\deg D}\gamma \notin
(\*\F)^l$. Therefore $\F(\sqrt[l^2]{\varepsilon})={\mathbb F}_{q^{l^2}}$ where
$\varepsilon=(-1)^{\deg D}\gamma$.

Then $M={\mathbb F}_{q^{l^2}}(T)=k_{l^2}$ and $E=KM\cap \cicl D{}$. Thus
$E=k(\sqrt[l^2]{(-1)^{\deg D}D})=k(\sqrt[l^2]{\*D})\subseteq \cicl D{}$. We have
\[
EK=E(\sqrt[l^2]{\varepsilon})=K(\sqrt[l^2]{\varepsilon})=E_{l^2}=K_{l^2}.
\]
We have $f_{\infty}(K/k)=[\F(\sqrt[l]{\varepsilon}):\F]=l$ and therefore
$f_{\infty}(EK/K)=\frac{f_{\infty}(EK/k)}{f_{\infty}(K/k)}=\frac{l^2}{l}=l$.
Thus $H \cong C_l$ and $|H|=l$.

We also have $\ge E=k\big(\sqrt[l^2]{\*{P_1}},\ldots,\sqrt[l^2]{\*{P_s}},
\sqrt[l]{\*{P_{s+1}}},\ldots \sqrt[l]{\*{P_r}}\big)$. Since $l\nmid \deg P_1$,
it follows that $e_{\infty}\big(k\big(\sqrt[l^2]{\*{P_1}}\big)/k\big)=l^2$ 
and therefore $e_{\infty}(\ge E/
k)=l^2$. Since $\deg D=ld$ with $l\nmid d$ we have $e_{\infty}(\g E/k)=l$.
Now
\[
\deg D=ld =\sum_{i=1}^r \alpha_i\deg P_i =\alpha_i \deg P_1+\sum_{i=2}^s
\alpha_i\deg P_i+\sum_{j=s+1}^r \alpha_j \deg P_j,
\]
and since $l|\sum_{j=s+1}^r\alpha_j\deg P_j$ and $\gcd(\alpha_i,l)=1$
for $1\leq i\leq s$, it follows that there exists $2\leq i\leq s$ with $l\nmid
\deg P_i$. Say $l\nmid \deg P_2$.

Let $a,b\in{\ma Z}$ with $a\deg P_1+bl^2=1$ (in particular, $\gcd (a,l)=1$).
Therefore, we have for $i\geq 2$ that $\deg P_i-(a\deg P_i)\deg P_1=
b(\deg P_i)l^2$. Set $Q_i=P_i P_1^{z_i}$ with $z_i:=-a\deg P_i$ for $2
\leq i\leq r$. Let $L:=k\big(\sqrt[l]{\*{P_1}},\sqrt[l^2]{Q_2},\ldots, 
\sqrt[l^2]{Q_s},\sqrt[l]{Q_{s+1}},\ldots, \sqrt[l]{Q_r}\big)$. Then
$e_{\infty}(L/k)=l$, $e_{P_i}(L/k)=l^2$ for $2\leq i\leq s$ and
$e_{P_j}(L/k)=l$ for $s+1\leq j\leq r$. For $P_1$ we have that since
$l\nmid \deg P_2$, $\sqrt[l^2]{Q_2}=\sqrt[l^2]{P_2P_1^{-a\deg P_2}}$
and $\gcd (l,-a\deg P_2)=1$ so that $e_{P_1}(k(\sqrt[l^2]{Q_2})/k)=l^2$.
Hence $e_{P_1}(L/k)=l^2$. Thus $E\subseteq L$ and $[\ge E:L]=l$, it 
follows that
\begin{gather*}
L=\g E=k\big(\sqrt[l]{\*{P_1}},\sqrt[l^2]{Q_2},\ldots, 
\sqrt[l^2]{Q_s},\sqrt[l]{Q_{s+1}},\ldots, \sqrt[l]{Q_r}\big),\\
\ge E=k\big(\sqrt[l^2]{\*{P_1}},\ldots,\sqrt[l^2]{\*{P_s}},
\sqrt[l]{\*{P_{s+1}}},\ldots \sqrt[l]{\*{P_r}}\big),\\
[\ge E:\g E]=l=e_{\infty}(\ge E/\g E).
\end{gather*}

Now, $EK=K_{l^2}$, so that $H\cong D_{\infty}(EK/K)\cong \Gal(
K_{l^2}/K_{l})$, where $D_{\infty}$ denotes the decomposition group
of the infinite primes.
\[
\xymatrix{
EK=K_{l^2}\ar@{-}[d]^{f_{\infty}(EK/K_l)=l}
\\ K_l\ar@{-}[d]^{f_{\infty}(K_l/K)=1}\\ K
}
\]

We have
\[
E^H=k(\sqrt[l]{\*D})=k(\sqrt[l]{D})\quad \text{and}\quad \g{E^H}
=k\big(\sqrt[l^2]{Q_2},\ldots, 
\sqrt[l^2]{Q_s},\sqrt[l]{Q_{s+1}},\ldots, \sqrt[l]{Q_r}\big).
\]
We also obtain that $\ge{(\g{E^H})}=\ge E$ since $[\ge{(\g{E^H})}:k]=
\prod_{j=1}^re_{P_j}(\g{E^H}/k)=[\ge E:k]$ and $\g{E^H}
\subseteq \ge E$. It follows that 
\begin{gather*}
\ge K=\ge E K=k\big(
\sqrt[l^2]{\*{P_1}},\ldots,\sqrt[l^2]{\*{P_s}},
\sqrt[l]{\*{P_{s+1}}},\ldots \sqrt[l]{\*{P_r}}\sqrt[l^2]{\gamma D}\big)
=\ge E(\sqrt[l^2]{\varepsilon})
\intertext{and}
\g K=\g E^H K=k\big(\sqrt[l^2]{Q_2},\ldots,\sqrt[l^2]{Q_s},\sqrt[l]{Q_{s+1}},
\ldots,\sqrt[l]{Q_r},\sqrt[l^2]{\gamma D}\big).
\end{gather*}
}}
\end{example}

\begin{remark}\label{R5.2}{\rm{
\l
\item When $H=\{1\}$, $\ge K=K^{\ext}=\ge E K$.
\item In general we have $f_{\infty}(\ge K|\g K)>1$
(Examples \ref{Ex5.3} and \ref{Ex5.4}).
\item The field of constants of $H_K^+$ might be different from the one
of $H_K$ (Examples \ref{Ex5.3} and 
\ref{Ex5.4}). In Example \ref{Ex5.4}, ${\ma F}_{q^l}$ is the
field of constants of $H_K$ and ${\ma F}_{q^{l^2}}$ is the field of constants
of $H_K^+$.
\item In general the extension $\ge K/\g K$ might contain ramified 
extensions. In Example \ref{Ex5.4}, $\ge K=\g K(\sqrt[l^2]{P_1})$ and
$\g K\subsetneqq ({\g K})_{l^2}=\g K(\sqrt[l]{P_1})\subsetneqq \ge K$.
We have $e_{\infty}(\ge K|\g K)=f_{\infty}(\ge K|\g K)=l$.
\end{list}
}}
\end{remark}

For the description of $\ge K$ for a finite separable extension $K/k$, at least
in most cases, see \cite{RaRzVi2019}.

\end{document}